
\documentclass[preprint,12pt,authoryear]{elsarticle}




\usepackage{amssymb}
\usepackage{amsthm}
\newtheorem{theorem}{Theorem}[section]
\newtheorem{lemma}[theorem]{Lemma}
\newtheorem{definition}{Definition}[section]
\newtheorem{problem}{Problem}[section]

\usepackage{amsmath}


\usepackage{tikz}
\usepackage{tikzscale}
\usetikzlibrary{external}
\tikzexternalize[prefix=tikz/]
\newlength\figureheight
\newlength\figurewidth
\usepackage{pgfplots}

\usepackage{algorithm}
\usepackage{algpseudocode}

\usepackage{xcolor}

\journal{Applied Numerical Mathematics}

\begin{document}
	
	\begin{frontmatter}
		
		
		 \title{Algorithms for Modifying Recurrence Relations of Orthogonal Polynomial and Rational Functions when Changing the Discrete Inner Product}
		
		\author[b]{Marc Van Barel}
		\ead{marc.vanbarel@kuleuven.be}
		\affiliation[b]{organization={Department of Computer Science, KU Leuven, University of Leuven},
			addressline={Celestijnenlaan 200A}, 
			city={Leuven},
			postcode={3001}, 
			country={Belgium}}
		
		\author[a]{Niel Van Buggenhout\corref{cor1}}
		\ead{buggenhout@karlin.mff.cuni.cz}
		\cortext[cor1]{Corresponding author}
		\affiliation[a]{organization={Faculty of Mathematics and Physics, Charles University},
			addressline={Sokolovsk\'a 83}, 
			city={Praha},
			postcode={186 75}, 
			country={Czech Republic}}
		
		\author[b]{Raf Vandebril}
		\ead{raf.vandebril@kuleuven.be}
		

		\begin{abstract}
		    Often, polynomials or rational functions, orthogonal for a particular inner product are desired. In practical numerical algorithms these polynomials are not constructed, but instead the associated recurrence relations are computed.  Moreover, also typically the inner product is changed to a discrete inner product, which is the finite sum of weighted functions evaluated in specific nodes. For particular applications it is beneficial to have an efficient procedure to update the recurrence relations when adding or removing nodes from the inner product.
			The construction of the recurrence relations is equivalent to computing a structured matrix (polynomial) or pencil (rational) having prescribed spectral properties. Hence the solution of this problem is often referred to as solving an Inverse Eigenvalue Problem. In \cite{VBVBVa22} we proposed updating techniques to add nodes to the inner product while efficiently updating the recurrences. To complete this study we present in this article manners to efficiently downdate the recurrences when removing nodes from the inner product.
			The link between \textit{removing nodes} and the \textit{QR algorithm to deflate eigenvalues} is exploited to develop efficient algorithms. We will base ourselves on the perfect shift strategy and develop algorithms, both for the polynomial case and the rational function setting. Numerical experiments validate our approach.

		\end{abstract}
		
%
		
		\begin{keyword}
			Orthogonal Rational Functions \sep Orthogonal Polynomials \sep Downdating \sep Discrete Inner Product \sep QR algorithm \sep Perfect Shift Strategy
			
			
			
		\end{keyword}
		
	\end{frontmatter}
	
	
	\section{Introduction}\label{intro}
	Given the nodes $z_i\in\mathbb{C}$ and weights $w_i\in\mathbb{C}$, consider the following  finite discrete inner product:
	\begin{equation}
		\langle f,g\rangle_m := \sum_{i=1}^{m} \vert w_i\vert^2 \overline{g(z_i)} f(z_i).
	\end{equation}
	In this paper we will consider a sequence of polynomials or rational functions
	with prescribed poles that is orthogonal with respect to this inner product.
	This sequence of orthogonal functions is characterized by a recurrence relation where the recurrence coefficients
	can be grouped together in a Hessenberg matrix for orthogonal polynomials, see \cite{b009}, or a Hessenberg pencil for 
	orthonormal rational functions, see \cite{VBVBVa22}.
	A method for the construction of a sequence of orthogonal polynomials or rational functions with respect to the positive semidefinite inner product
	is based on manipulating the corresponding recurrence matrix or recurrence pencil.
	A method to compute this Hessenberg matrix or Hessenberg pencil based on repeatedly adding a node until all nodes of the inner product are added, i.e., an updating problem, is equivalent to solving an inverse eigenvalue problem. 
	For orthonormal polynomials, when the nodes $z_i$ are real, \cite{m268} give such a method to compute a Jacobi matrix, i.e., a Hermitian Hessenberg matrix. This algorithm can be traced back to \cite{p468} and is an order of magnitude faster than for general nodes $z_i$. \cite{m267} uses this method to solve polynomial least squares problems by using the property that its solution can be found by projection onto a sequence of polynomials orthogonal to the given data.
	When the nodes are on the unit circle $z_i\in\mathbb{C}$, another efficient algorithm can be constructed resulting in a unitary Hessenberg instead of a Jacobi matrix, see \cite{m535,m256,p478}. For a survey of inverse eigenvalue problems, we refer to \cite{m887}.
	These methods were generalized to orthonormal polynomial vectors, see \cite{ma015,ma017,ma013,ma011,q331}.
	For orthogonal rational functions with prescribed poles, recently an updating procedure was proposed 
	based on the representation of the recurrence relation as a Hessenberg pencil, see \cite{VBVBVa22,ma054,p453,b141}.\\
	Instead of adding a node to the inner product, we can also remove a node, which is called a downdating problem.
	The downdating problem, due to its nature, is more difficult to perform in a numerically stable way, see \cite{n337}.
	For downdating orthogonal polynomials when the nodes $z_i$ are on the unit circle, we refer to \cite{m275}.
	A recent result by \cite{MaVD18} on deflating a known eigenvalue from a Hessenberg matrix suggests a reliable downdating procedure for orthogonal polynomials.
	We use their result to propose a novel downdating procedure for orthogonal polynomials and compare it to an alternative QR procedure, originally proposed for unitary Hessenberg matrices by \cite{m256}.
	For orthonormal rational functions we use the Hessenberg pencil representation and propose generalizations of the methods mentioned above.
	The generalization of the QR procedure to a Hessenberg pencil is the rational QZ method from \cite{CaMeVa19}.
	
	The paper is organised as follows.
	In Section~\ref{sec:problem} inverse eigenvalue problems as well as the downdating problem for orthogonal polynomial and rational functions with respect to a discrete inner product are defined.
	For the polynomial case, a Hessenberg matrix has to be constructed satisfying certain spectral properties while for the rational case, it is a Hessenberg pencil.
	In Section~\ref{sec:down:poly} three different methods are developed for solving the downdating problem for the polynomial case based on the theory given in Section~\ref{Hess:eig}.
	Based on the RQ algorithm for Hessenberg matrices an implicit as well as an explicit method are given (the implicit and explicit matrix method)
	The third method is based on the eigenvector corresponding to the eigenvalue (perfect shift) that has to be deflated (the eigenvector method).
	Section \ref{sec:numExp:poly} illustrates the three methods by several numerical experiments.
	To solve the downdating problem for orthogonal rational functions, two numerical methods are given in Section~\ref{sec:NumAlg:RF} based on the theory developed in Sections~\ref{sec:QRKrylov} and \ref{sec:GEVP}.
	Numerical experiments are described in Section~\ref{sec:NumExp:RF}.
	In Section~\ref{sec:sliding_window}, as an application a sliding window scheme to approximate a function is designed based on the updating and downdating procedure for orthogonal rational functions. This is illustrated by a numerical experiment.

	\section{Problem (re)formulation}\label{sec:problem}
	The problems of constructing polynomials and rational functions, orthogonal for a given discrete inner product, can be recast into matrix problems. In this article we will base all techniques on the associated matrix problem: we will examine how up- and downdating the nodes in the inner product relates to up- and downdating the matrix or pencil of recurrences.

	The inner product considered throughout the paper is a finite discrete inner product:
	\begin{equation}\label{eq:inprod}
		\langle f,g\rangle_m := \sum_{i=1}^{m} \vert w_i\vert^2 \overline{g(z_i)} f(z_i),
	\end{equation}
	with nodes $z_i\in\mathbb{C}$ and weights $w_i\in\mathbb{C}$.
	The restriction to such inner products is natural, since a continuous inner product can be approximated by an appropriate quadrature rule, resulting in a finite discrete inner product \cite[p.90]{Ga04book}. The selection of nodes and weights is of course problem specific and can change when the problem changes, hence the proposed algorithms for efficiently modifying the recurrences.

	\subsection{Orthogonal polynomials}
	We denote the space of polynomials by $\mathcal{P}$ and the space of polynomials up to degree $\ell$ by $\mathcal{P}_\ell$.
	A sequence of discrete orthonormal polynomials $\{p_\ell\}_{\ell=0}^{m-1}$ is characterized by a condition on the degree $p_\ell\in\mathcal{P}_{\ell}\backslash\mathcal{P}_{\ell-1}$ and the orthonormality condition $\langle p_i,p_j \rangle_m	= \delta_{i,j}$, with $\delta_{i,j}$ the Kronecker delta.\\
	Consider the following inverse eigenvalue problem (Problem~\ref{problem:HIEP}): given eigenvalues and partial eigenvector information, construct a structured matrix (Hessenberg form) having the desired eigenvalue and eigenvector information.
	\begin{problem}[Hessenberg Inverse Eigenvalue Problem (HIEP)]\label{problem:HIEP}
		Given $Z=\textrm{diag}(z_1,\dots,z_m)$, $w = \begin{bmatrix}
			w_1 & \dots & w_m
		\end{bmatrix}^\top$, construct $Q,H_m\in\mathbb{C}^{m\times m}$ such that
		\begin{align*}
			ZQ = QH_m, \quad Q^H Q = I, \quad	Q e_1 = w/\Vert w\Vert, \quad H_m \text{ has Hessenberg structure}.
		\end{align*}
	\end{problem}
	
	Given a discrete inner product \eqref{eq:inprod}, then the
	problem of constructing the associated orthogonal polynomials
	amounts to solving Problem~\ref{problem:HIEP}, with eigenvalues
	equal to the nodes and the $w_i$ as weights.  The matrix of
	recurrences will be the Hessenberg matrix $H_m$, see \cite{b009}. For
	particular choices of nodes, additional structure can be imposed
	on the Hessenberg matrix. For example, if $z_i\in\mathbb{R}$, the
	generated matrix is a Jacobi matrix; for unimodular complex
	nodes the Hessenberg matrix will be unitary, see \cite{m257,m268,p513}.
	In this article we deal with the generic \emph{unstructured} Hessenberg case.
	
	Recently \cite{VBVBVa22} proposed a technique to efficiently solve Problem~\ref{problem:HIEP} by efficiently adding the nodes incrementally, one after the other and updating the existing Hessenberg matrix of recurrences.	
	Downdating of a Hessenberg matrix is formulated in Problem \ref{problem:downdateHess} and essentially amounts to removing an eigenvalue from the Hessenberg's spectrum.
	\begin{problem}[Downdate HIEP]\label{problem:downdateHess}
		Consider the solution $H_m\in\mathbb{C}^{m\times m}$ to Problem \ref{problem:HIEP}, with nodes $Z = \textrm{diag}(z_1,\dots,z_m)$ and a vector containing the weights $w=\begin{bmatrix}
			w_1 & \dots & w_m
		\end{bmatrix}^\top$.
		Given a node $\tilde{z}\in \{z_i \}_{i=1}^m$, assume, without loss of generality, $\tilde{z} = z_j$.
		Denote by $\tilde{Z} = \textrm{diag}(\{z_i \}_{i=1,i\neq j}^m)$ the new matrix of nodes and let $\tilde{w} = \begin{bmatrix}
			w_1 & \dots w_{j-1} & w_{j+1} & \dots w_m
		\end{bmatrix}^\top$.
		Compute the Hessenberg matrix $\tilde{H}_{m-1}\in\mathbb{C}^{(m-1)\times (m-1)}$ such that
		\begin{align*}
			\tilde{Z} \tilde{Q} = \tilde{Q} \tilde{H}_{m-1}, \quad \tilde{Q}^H \tilde{Q} = I \quad \text{and}\quad \tilde{Q}e_1 = \tilde{w}/\Vert\tilde{w} \Vert_2.
		\end{align*}
	\end{problem}
	
	The solution to Problem \ref{problem:downdateHess}, i.e., the Hessenberg matrix $\tilde{H}_{m-1}$, is the recurrence matrix for a sequence of orthogonal polynomials $\{\tilde{p}_l \}_{l=1}^{m-1}$ satisfying the inner product
	\begin{equation*}
		\langle f,g\rangle_{\sim} := \sum_{i=1,i\neq j}^{m} \vert w_i\vert^2 \overline{g(z_i)} f(z_i).
	\end{equation*}
	
	Algorithms for solving Problems~\ref{problem:HIEP} and \ref{problem:downdateHess} allow us to efficiently modify the matrix of recurrences such that it matches the changed inner product.
	It is important to note that the Hessenberg matrices we are dealing with are normal. They are hence unitarily diagonalizable, and have only simple eigenvalues. This is no constraint at all, as the matrices we consider from the Inverse Eigenvalue Problem, satisfy these constraints naturally.
	
	\subsection{Orthogonal rational functions}
	A rational function $r(z)\in\mathcal{R}$ is the ratio of two polynomials, 
	\begin{equation*}
		r(z) = \frac{p(z)}{q(z)}, \quad p(z), q(z)\in\mathcal{P}.
	\end{equation*}
	Our interest is in rational functions with prescribed poles.
	Given a set of poles $\Xi = \{\xi_1,\xi_2,\dots, \xi_k\}$, with $\xi_i\in\overline{\mathbb{C}}$ ($\overline{\mathbb{C}}=\mathbb{C}\cup{\infty}$):
	\begin{equation}\label{eq:ratFct}
		r(z) = \frac{p(z)}{\pi(z)}, \quad p(z)\in\mathcal{P} \text{ and } \pi(z)=\underset{\xi_i \neq \infty }{\prod_{i=1}^{k}}(z-\xi_i).
	\end{equation}
	The space formed by these rational functions will be denoted by $\mathcal{R}^{\Xi}$, which is a short notation for $\mathcal{P}/\pi(z)$:
	The finite dimensional space $\mathcal{R}^{\Xi}_k$ is defined as $\textrm{span}\{1,r_1(z),\dots,r_k(z)\}$, with
	\begin{equation}\label{eq:ratFctPoles}
		r_{\ell}(z) = \frac{p_{\ell}(z)}{\pi_\ell(z;\Xi)}, \quad \pi_\ell(z;\Xi) := \underset{\xi_i \neq \infty }{\prod_{i=1}^{\ell}}(z-\xi_i), \quad p_{\ell} \in \mathcal{P}_\ell\backslash\mathcal{P}_{\ell-1}.
	\end{equation}
	The set of poles $\Xi$ for $\mathcal{R}^{\Xi}_k$ must contain (at least) $k$ poles $\xi_i\in\overline{\mathbb{C}}$.
	A sequence $\{r_\ell\}_{\ell=0}^{m-1}$ of discrete orthogonal rational functions (ORFs) is characterized by $r_\ell\in\mathcal{R}^{\Xi}_\ell\backslash\mathcal{R}^{\Xi}_{\ell-1}$ and $		\langle r_i,r_j\rangle_m = \delta_{i,j}$.
	Whereas in the polynomial setting the recurrences of the orthogonal polynomials are found in a Hessenberg matrix, here the recurrences for generating the orthogonal rational functions will be stored in a Hessenberg pencil. Again the eigenvalues of the pencil link to the nodes of the inner product and the weights link to information on the eigenvectors; the poles, however, are found as the ratio of the subdiagonal elements of the Hessenberg matrices in the pencil.
	Constructing the recurrence relations for discrete ORFs can be formulated in terms of an inverse eigenvalue problem for a Hessenberg pencil, see \cite{VBVBVa22}.
	\begin{problem}[Hessenberg Pencil Inverse Eigenvalue Problem (HPIEP)]\label{problem:HPIEP}
		Given $Z=\textrm{diag}(z_1,\dots,z_m)$, $w = \begin{bmatrix}
			w_1 & \dots & w_m
		\end{bmatrix}^\top$ and $\Xi = \{\xi_1,\dots,\xi_{m-1}\}$ with $\xi_i\in\overline{\mathbb{C}}\backslash \{z_j\}_{j=1}^m$, construct $Q,H_m,K_m\in\mathbb{C}^{m\times m}$, with $(H_m,K_m)$ a Hessenberg pencil, such that
		\begin{align*}
			ZQK_m = QH_m, \quad Q^H Q = I, \quad	Q e_1 = w/\Vert w\Vert,
		\end{align*}
		and $h_{i+1,i}/k_{i+1,i} = \xi_i$, where $h_{i,j}$ and
		$k_{i,j}$ denote the individual elements of the matrices $H_m$ and $K_m$.
	\end{problem}
	The downdating problem for ORFs is similar to downdating  OPs.
	A node-weight pair $(\tilde{z},\tilde{w})$ can be removed from the inner product \eqref{eq:inprod} as well as a freely chosen pole $\tilde{\xi}$.
	The pole $\tilde{\xi}$ can be any pole, that is, if $\tilde{z}=z_j$, then we are not obliged to choose $\tilde{\xi}=\xi_{j}$.
	This is because of the fact that the choice of space $\mathcal{R}^{\Xi}$ and the inner product $\langle .,.\rangle_m$ are not related.

	\begin{problem}[Downdate HPIEP]\label{problem:downdateHP}
		Consider $(H_m,K_m)\in\mathbb{C}^{m\times m}\times\mathbb{C}^{m\times m}$, the solution to Problem \ref{problem:HPIEP}, with nodes $Z = \textrm{diag}(z_1,\dots,z_m)$ and a vector containing the weights $w=\begin{bmatrix}
			w_1 & \dots & w_m
		\end{bmatrix}^\top$.
		Given a node $\tilde{z}\in \{z_i \}_{i=1}^m$, assume, without loss of generality, $\tilde{z} = z_j$ and a pole $\tilde{\xi}\in\Xi$, assume this pole is $\tilde{\xi} = \xi_\ell$.
		Denote by $\tilde{Z} = \textrm{diag}(\{z_i \}_{i=1,i\neq j}^m)$ the new matrix of nodes, the weights by $\tilde{w}=\begin{bmatrix}
			w_1 & \dots w_{j-1} & w_{j+1} & \dots w_m
		\end{bmatrix}^\top$ and by $\tilde{\Xi} = \{\xi_1,\dots,\xi_{\ell-1},\xi_{\ell+1},\dots,\xi_{m-1} \}$ the set of poles.
		Compute the Hessenberg pencil $(\tilde{H}_{m-1},\tilde{K}_{m-1})\in\mathbb{C}^{(m-1)\times (m-1)}\times \mathbb{C}^{(m-1)\times (m-1)}$ such that
		\begin{align*}
			\tilde{Z} \tilde{Q} \tilde{K}_{m-1}= \tilde{Q} \tilde{H}_{m-1}, \quad \tilde{Q}^H \tilde{Q} = I, \quad \tilde{Q}e_1 = \tilde{w}/\Vert\tilde{w} \Vert_2,
		\end{align*}
		and $\tilde{h}_{i+1,i}/\tilde{k}_{i+1,i} = \tilde{\xi}_i$.
	\end{problem}
	The resulting Hessenberg pencil $(\tilde{H}_{m-1},\tilde{K}_{m-1})$ forms the recurrence relation for ORFs $\{\tilde{r}_\ell\}_{\ell=0}^{m-2}$ satisfying $\tilde{r}_\ell\in\mathcal{R}^{\tilde{\Xi}}_\ell\backslash\mathcal{R}^{\tilde{\Xi}}_{\ell-1}$ and $	\langle \tilde{r}_i,\tilde{r}_j\rangle_{\sim} = \delta_{i,j}$.
	We comment that the order of the poles on the subdiagonal is not
	fixed. One can reorder these poles without any loss of generality, it
	will have no effect on the entire space spanned by the orthogonal
	rational functions represented by the recurrences, but the individual
	orthogonal rational functions, as well as subspaces might of course differ.
	Again we have to note that the pencil we are considering is particular, we
	call it a normal Hessenberg pencil. It admits the special unitary
	factorization, as mentioned in Problem~\ref{problem:HPIEP}. See also
	\cite{rv004} for more properties on normal pencils.

	\section{Manipulating the Hessenberg eigenvalue decomposition}\label{Hess:eig}
	Solving these matrix problems requires us to manipulate the eigenvalue
	decomposition of the associated Hessenberg matrix or pencil. This
	section introduces basic concepts and algorithms that will be used for
	the Hessenberg setting. We work with proper matrices. Being proper implies that the recurrences will not break down.
	In other words: We can construct a full set of orthogonal polynomials
	given the specified inner product. Also the Hessenberg matrix we work
	with is normal by construction, it is thus unitarily diagonalizable.

	In the forthcoming sections we illustrate how to stably modify the
	Hessenberg matrix to remove  the undesired nodes. This
	results in a new proper matrix providing the recurrence
	relations for the modified inner product.
	
	The basic principle is the following one.
	We follow the notation from Problem~\ref{problem:downdateHess}. Suppose we have a solution to Problem~\ref{problem:HIEP}, i.e., $ZQ=QH_m$, where $Q$ is unitary and $Qe_1=w/\|w\|_2$. We construct a unitary matrix\footnote{The
		Hermitian conjugate is put on purpose on the other side, to link to
		the RQ algorithm we will describe further on.} $\hat{Q}$ and
	execute a transformation  on $H_m$: $\hat{Q} Q^H Z Q \hat{Q}^H =
	\hat{Q} H_m \hat{Q}^H = \tilde{H}_m$.
	After some permutations of the rows in $\hat{Q} Q^H$ we can write this equation as 
	\begin{equation}
		\label{eq:beforedefl}
		\begin{bmatrix}
			1 \\
			&\tilde{Q}^H
		\end{bmatrix}
		\begin{bmatrix}
			\tilde{z} \\
			&\tilde{Z}
		\end{bmatrix}
		\begin{bmatrix}
			1 \\
			&\tilde{Q}
		\end{bmatrix}
		=
		\begin{bmatrix}
			\tilde{z} \\
			&\tilde{H}_{m-1}
		\end{bmatrix}.
	\end{equation}
	The permutation is not executed in practical implementations, we have
	simply added it to illustrate that the structure of the permuted
	$\hat{Q} Q^H$ allows to deflate $\tilde{Q}$ easily; $\tilde{H}_m$ is automatically of the correct form.
	Extracting the trailing $(m-1)\times(m-1)$ principal matrices provides us the
	desired solution, under the condition that $\tilde{Q}$ satisfies the
	desired condition on the weight vector, i.e., the first vector of
	$\tilde{Q}$ must be a multiple of $\tilde{w}$. Executing this numerically poses three challenges:
	\begin{enumerate}
		\item Equation~\eqref{eq:beforedefl} must have the correct block form,
		i.e., block diagonal, where $\tilde{z}$ can be identified and
		removed.
		\item The matrix $\tilde{H}_{m-1}$ must be of Hessenberg form.
		\item The desired weight vector must equal the first column of $\tilde{Q}$.
	\end{enumerate}
	


	\subsection{The QR and RQ algorithm for a Hessenberg matrix}
	
	To deflate an eigenvalue we will run a variant of the QR algorithm on a proper Hessenberg matrix.
	Note that we assume the Hessenberg matrix normal and only having
	simple eigenvalues. The version we describe in this paragraph is the
	explicit QR version. Further on, based on Theorem~\ref{theorem:RQ_equivalent} we will	discuss two more numerically different variants.
	
	\begin{definition}[Proper Hessenberg matrix]\label{def:properHess}
		A Hessenberg matrix $H\in\mathbb{C}^{m\times m}$ is called \emph{proper} if all its subdiagonal elements differ from zero, i.e., $h_{i+1,i} \neq 0$ for $i=1,2,\dots,m-1$.
	\end{definition}
	
	The QR algorithm is the method of choice for computing the eigenvalues of modestly sized dense matrices. It is an iterative process, where each step is governed by a shift. The eigenvalue closest to the shift is then pushed to the lower right corner of the matrix and can after sufficient iterations be removed (deflated). This process is repeated until all eigenvalues are found.
	
	Instead of shifted QR steps, we use perfectly shifted RQ steps. QR converges to the lower right corner, RQ converges to the upper left corner; and perfectly shifted means that the shift coincides with an eigenvalue. As a consequence we can expect convergence in a single step to the upper left corner.
	
	An RQ step with shift $\lambda$ works as follows on a Hessenberg
	matrix $H$. Compute the RQ factorization of $H-\lambda I=\hat{R}\hat{Q}$ and form
	a new Hessenberg matrix $\tilde{H}=\hat{Q}\hat{R}+\lambda I$. Essentially this is a
	similarity transformation with $\hat{Q}$ on $H$: $\tilde{H}=\hat{Q}H \hat{Q}^H$.
	For a Hessenberg matrix, a single step of a RQ step with perfect shift
	will reveal the eigenvector corresponding to the shift (eigenvalue)
	$\lambda$, and this is exactly what we will need for the downdating
	algorithms. The theorem and proof considered here stem from
	\cite{p512}; we repeat some parts to make use of these arguments further on. 
	
	\begin{lemma}\label{lemma:RQ_perfectShift}
		Let $H\in \mathbb{C}^{m\times m}$ be a proper Hessenberg matrix with distinct eigenvalues and denote one such eigenvalue by $\lambda\in \mathbb{C}$.
		Then the first row of $\hat{Q}$ appearing in the RQ decomposition of $H-\lambda I$ reveals the eigenvector corresponding to $\lambda$.
	\end{lemma}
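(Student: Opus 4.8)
The plan is to use the rank structure of $H-\lambda I$ to locate the zero on the diagonal of the triangular factor $\hat R$, and then to read off the eigenvector from the first column of $\hat Q^H$. Observe first that $H-\lambda I$ is again a proper Hessenberg matrix, since subtracting $\lambda I$ leaves the subdiagonal untouched. Its rows $2,3,\dots,m$ therefore have their leading nonzero entries in columns $1,2,\dots,m-1$ respectively, each such entry being a subdiagonal entry $h_{i,i-1}\neq 0$; as these leading entries sit in distinct columns, the $m-1$ rows are linearly independent and $\operatorname{rank}(H-\lambda I)\ge m-1$. Since $\lambda$ is an eigenvalue, $H-\lambda I$ is singular, so in fact $\operatorname{rank}(H-\lambda I)=m-1$.

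I would then transport this to the factorization $H-\lambda I=\hat R\hat Q$. Writing $\hat R=(H-\lambda I)\hat Q^H$ with $\hat Q$ unitary, the rank is preserved, $\operatorname{rank}(\hat R)=m-1$, and rows $2,\dots,m$ of $\hat R$ remain linearly independent, because right multiplication by the invertible matrix $\hat Q^H$ cannot destroy independence of a set of rows. But $\hat R$ is upper triangular, so each of its rows $2,\dots,m$ vanishes in the first column; if $r_{11}$ were nonzero, the first row of $\hat R$ would be independent of the remaining $m-1$ rows and $\hat R$ would have full rank, a contradiction. Hence $r_{11}=0$, i.e.\ $\hat R e_1=0$. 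Consequently $(H-\lambda I)\hat Q^H e_1=\hat R e_1=0$ with $\hat Q^H e_1\neq 0$ (it has unit norm), so $\hat Q^H e_1$ spans the one-dimensional eigenspace of $H$ associated with $\lambda$ — one-dimensional because the eigenvalues are distinct. Since $\hat Q^H e_1$ is exactly the conjugate transpose of the first row of $\hat Q$, that first row reveals the eigenvector corresponding to $\lambda$, which is the claim.

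The only delicate point — and the main obstacle — is to argue that the vanishing diagonal entry of $\hat R$ is precisely $r_{11}$, rather than some other $r_{kk}$; the determinant identity alone only yields $\prod_k r_{kk}=0$. This is exactly where properness enters: it forces rows $2,\dots,m$ of $H-\lambda I$, and hence of $\hat R$, to be linearly independent, which together with the upper-triangular shape pins the rank deficiency to the first row. A more computational alternative would follow the $m-1$ Givens rotations that assemble $\hat Q^H$ from the bottom up and check directly that only the $(1,1)$ entry of $\hat R$ can be annihilated, but the rank argument above is shorter and coordinate-free, so that is the route I would take.
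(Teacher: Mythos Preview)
Your proof is correct and follows essentially the same route as the paper: both use properness to see that the last $m-1$ rows of $H-\lambda I$ are linearly independent, transport this row independence to $\hat R$ via the unitary factor, and conclude that the rank deficiency of the upper triangular $\hat R$ must sit at $r_{11}$. Your write-up is in fact a bit more explicit than the paper's, spelling out the staircase argument for row independence and deriving $(H-\lambda I)\hat Q^H e_1=0$ inside the proof rather than as a corollary afterwards.
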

	\begin{proof}
		The shifted Hessenberg matrix $H-\lambda I$ is singular.
		Hence, the upper triangular matrix $\hat{R}\in \mathbb{C}^{m\times m}$ from the RQ decomposition
		\begin{equation*}
			H-\lambda I = \hat{R}\hat{Q}
		\end{equation*}
		must also be singular and therefore a diagonal element $r_{i,i}$ must satisfy $r_{i,i}=0$ for some $i$.
		Since $H-\lambda I$ is Hessenberg, its last $m-1$ rows are linearly independent. As also the rows of the unitary matrix $\hat{Q}$ are linearly independent,  it follows that the corresponding rows of $\hat{R}$ must also linearly independent. As a consequence there is only the sole option that $r_{1,1}=0$.
	\end{proof}
	
	From Lemma \ref{lemma:RQ_perfectShift} it follows that a single RQ step with perfect shift leads to the decomposition
	\begin{equation}\label{eq:RQ_perfectShift}
		H-\lambda I = \begin{bmatrix}
			0 & \boldsymbol{\times}\\
			& \hat{R}_{m-1}
		\end{bmatrix} \begin{bmatrix}
			\hat{q}^H\\
			\hat{Q}_{m-1}
		\end{bmatrix}
	\end{equation}
	with $\hat{Q}_{m-1}^H \hat{Q}_{m-1} = I$, $\hat{Q}_{m-1} \hat{q} =
	0$ and $H \hat{q} = \lambda \hat{q}$. The symbol $\boldsymbol{\times}$
	denotes some arbitrary unessential elements, i.e., a row vector in this case.
	The last equation shows that $\hat{q}$ is the right eigenvector of $H$ corresponding to eigenvalue $\lambda$.
	
	\begin{theorem}[Isolate eigenvalue using perfect shift RQ]\label{theorem:downdateHess}
		Let $\hat{R},\hat{Q}$ be the factors obtained by applying an RQ step to a normal $H$, having only simple eigenvalues, shifted with one of its eigenvalues $\lambda$. 
		Then the unitary similarity transformation $\hat{Q} H \hat{Q}^H$ isolates the eigenvalue $\lambda$, allowing deflation.
	\end{theorem}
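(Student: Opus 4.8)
The plan is to build directly on the decomposition~\eqref{eq:RQ_perfectShift} already established via Lemma~\ref{lemma:RQ_perfectShift}. We have $H-\lambda I=\hat R\hat Q$ with
\[
\hat R=\begin{bmatrix}0&\boldsymbol{\times}\\&\hat R_{m-1}\end{bmatrix},\qquad
\hat Q=\begin{bmatrix}\hat q^H\\\hat Q_{m-1}\end{bmatrix},
\]
where $\hat R_{m-1}$ is upper triangular and invertible (since $H$ has distinct eigenvalues, the deflated block inherits the remaining $m-1$ simple eigenvalues), $\hat Q_{m-1}^H\hat Q_{m-1}=I$, $\hat Q_{m-1}\hat q=0$, and $H\hat q=\lambda\hat q$. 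The first step is to write out the RQ step explicitly: $\tilde H=\hat Q\hat R+\lambda I=\hat Q H\hat Q^H$, and to compute the first column of $\tilde H-\lambda I=\hat Q\hat R$. Since the first column of $\hat R$ is zero (its only nonzero entries sit in the trailing $(m-1)\times(m-1)$ block and the first row), the first column of $\hat Q\hat R$ is $\hat Q\cdot 0=0$. Hence the first column of $\tilde H$ equals $\lambda e_1$, i.e.\ $\tilde H$ has the block form
\[
\tilde H=\begin{bmatrix}\lambda&\boldsymbol{\times}\\0&\tilde H_{m-1}\end{bmatrix},
\]
so $\lambda$ is isolated in the $(1,1)$ position and the trailing block $\tilde H_{m-1}$ can be deflated.

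The second step is to identify the pieces of this block form in terms of the RQ factors, which is what makes the result usable for the downdating algorithm rather than merely an existence statement. From $\tilde H=\hat Q\hat R+\lambda I$ and the zero first column of $\hat R$, one reads off that $\tilde H_{m-1}=\hat Q_{m-1}\hat R_{m-1}+\lambda I_{m-1}$ — that is, the trailing block is itself obtained by an RQ-type recombination of the corresponding sub-blocks — and that the $(1,2)$ block $\boldsymbol{\times}$ is $\hat q^H$ applied to the first row data of $\hat R$. Equivalently, partitioning $\hat Q=\big[\begin{smallmatrix}\hat q^H\\\hat Q_{m-1}\end{smallmatrix}\big]$, the similarity $\tilde H=\hat Q H\hat Q^H$ restricted to the trailing block reads $\tilde H_{m-1}=\hat Q_{m-1}H\hat Q_{m-1}^H$, which together with $\hat Q_{m-1}\hat q=0$ shows that $\hat Q_{m-1}$ maps the $\lambda$-complement of $H$ isometrically onto $\mathbb C^{m-1}$; this is precisely the statement that deflation is legitimate and that the trailing block carries exactly the remaining eigenvalues $\mathrm{spec}(H)\setminus\{\lambda\}$.

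The third, minor, step is to confirm that $\tilde H$ is again Hessenberg: a single RQ step with a unitary Hessenberg-preserving $\hat Q$ (i.e.\ $\hat Q$ assembled from the $m-1$ Givens rotations used to reduce $H-\lambda I$ to upper triangular form by the RQ factorization) maps Hessenberg matrices to Hessenberg matrices, a standard fact for the QR/RQ family; combined with the zero first column this gives the asserted block-upper-triangular-with-Hessenberg-trailing-block structure, and normality of $\tilde H$ follows since unitary similarity preserves normality.

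I expect the main obstacle to be not any single computation but making the ``automatically of the correct form'' claim fully rigorous: one must argue that the $(2,1)$ block is \emph{exactly} zero (not merely small), which hinges on Lemma~\ref{lemma:RQ_perfectShift}'s conclusion $r_{1,1}=0$ and, crucially, on $H-\lambda I$ being \emph{proper} Hessenberg so that the argument pinning the zero to the $(1,1)$ entry of $\hat R$ goes through. A secondary subtlety is ensuring $\hat R_{m-1}$ is nonsingular — this uses the distinct-eigenvalue hypothesis, since otherwise a second diagonal zero in $\hat R$ could appear and the clean block split would fail. Once these two points are in place, the theorem follows immediately from the explicit first-column computation above.
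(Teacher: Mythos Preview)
Your argument establishes only the block \emph{upper triangular} form
\[
\hat Q H\hat Q^H=\begin{bmatrix}\lambda&\boldsymbol{\times}\\0&\tilde H_{m-1}\end{bmatrix},
\]
which is precisely the ``straightforward result'' the paper explicitly contrasts with the theorem (see the remark immediately following the proof in the paper: ``the latter theorem is stronger \ldots\ as the upper row is also zero''). The actual content of Theorem~\ref{theorem:downdateHess} is that the $(1,2)$ block is \emph{zero}, i.e.\ the form is block \emph{diagonal}. You never argue this; your $\boldsymbol{\times}$ stays a $\boldsymbol{\times}$. You do observe that $\tilde H$ is normal, but you do not use it. The fix is one line: for normal $\tilde H=\bigl[\begin{smallmatrix}\lambda&v^H\\0&A\end{smallmatrix}\bigr]$, equating the $(1,1)$ entries of $\tilde H\tilde H^H$ and $\tilde H^H\tilde H$ gives $|\lambda|^2+\|v\|^2=|\lambda|^2$, hence $v=0$. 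With that addition your proof is complete.

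The paper's proof takes a different route: it invokes the unitary diagonalization $H=Q^HZQ$ (available by normality), identifies the first row $\hat q$ of $\hat Q$ with an eigenvector $q_j$, and then uses orthogonality of the eigenvectors to see that $\hat Q Q^H$ has the block structure $\bigl[\begin{smallmatrix}\alpha e_j^\top\\ \ast\end{smallmatrix}\bigr]$ with the $j$th column of the lower block zero; sandwiching $Z$ between this and its adjoint yields the block diagonal form directly. Your route (RQ recombination plus a normality argument) is arguably more elementary and avoids writing out the full eigenbasis, while the paper's route makes the role of normality and the eigenvector structure of $\hat Q$ more transparent --- which pays off later when the same idea is recycled for the pencil case in Theorem~\ref{theorem:perfectShiftRQZ}.
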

	\begin{proof}
		This proof appeared in the paper by \cite{p512}.
		Let $Q,Z$ be the factors of the eigenvalue decomposition of the normal matrix $H$, i.e., $H = Q^H Z Q$, $Q^H Q = I$ and $Z = \textrm{diag}(\{z_i\})$.
		From Lemma \ref{lemma:RQ_perfectShift}, more precisely \eqref{eq:RQ_perfectShift}, we have that $\hat{q} = q_j$ for some $j$.
		Now, by relying on the orthogonality of the eigenvectors we obtain
		\begin{align*}
			\hat{q}^H Q^H &=
			\hat{q}^H
			\begin{bmatrix}
				\vert & & \vert & \vert & \vert & & \vert \\
				q_1 & \dots & q_{j-1} & q_j & q_{j+1} & \dots & q_{m}\\
				\vert & & \vert & \vert & \vert & & \vert
			\end{bmatrix} \\
			&= \begin{bmatrix}
				0 & \dots & 0 & \alpha &0 & \dots & 0\\
			\end{bmatrix}\\
			&= \alpha e_j^\top,
		\end{align*}
		where $\alpha = \hat{q}^H q_j \neq 0$.
		Using the notation of above:
		$\hat{Q}:=\begin{bmatrix}
			\hat{q}^H\\
			\hat{Q}_{m-1}
		\end{bmatrix}$, we get
		\begin{align*}
			\hat{Q}_{m-1} Q^H &= \hat{Q}_{m-1}\begin{bmatrix}
				\vert & & \vert & \vert & \vert & & \vert \\
				q_1 & \dots & q_{j-1} & q_j & q_{j+1} & \dots & q_{m}\\
				\vert & & \vert & \vert & \vert & & \vert
			\end{bmatrix}\\
			&= 
			\begin{bmatrix}
				\vert & & \vert &  & \vert & & \vert \\
				\boldsymbol{\times} & \dots & \boldsymbol{\times} & {0} & \boldsymbol{\times} & \dots & \boldsymbol{\times}\\
				\vert & & \vert & & \vert & & \vert
			\end{bmatrix}
		\end{align*}
		This isolates the eigenvalue $\lambda$ corresponding to the eigenvector $\hat{q}$:
		\begin{equation}
			\label{eq:reorder}
			\hat{Q} H \hat{Q}^H = \hat{Q} Q^H Z Q \hat{Q}^H 
			= \begin{bmatrix}
				\lambda & \boldsymbol{0}\\
				\boldsymbol{0} & \widetilde{H}_{m-1}
			\end{bmatrix},
		\end{equation}
		where $\widetilde{H}_{m-1}\in \mathbb{C}^{(m-1)\times (m-1)}$ is an unreduced Hessenberg matrix with spectrum $\sigma(H)\backslash \{\lambda\}$.
		The matrix $\tilde{H}_{m-1}$ must be unreduced, because of the structure of the matrices $\hat{R}$ and $\hat{Q}$, namely $\hat{R}$ must have the trailing $m-1$ diagonal elements different from zero and $\hat{Q}$ is also of proper Hessenberg form.
	\end{proof}
	
	When inserting the permutation $P$ which will switch rows $j$
	and $1$ into Equation~\ref{eq:reorder} we get:
	\begin{equation*}
		(\hat{Q} Q^H P) (P Z P) (P Q \hat{Q}^H) 
		= 
		\begin{bmatrix}
			1 \\
			&\tilde{Q}^H
		\end{bmatrix}
		\begin{bmatrix}
			\tilde{z} \\
			&\tilde{Z}
		\end{bmatrix}
		\begin{bmatrix}
			1 \\
			&\tilde{Q}
		\end{bmatrix}
		= \begin{bmatrix}
			\lambda & \boldsymbol{0}\\
			\boldsymbol{0} & \widetilde{H}_{m-1}
		\end{bmatrix},
	\end{equation*}
	and we can deflate the desired submatrices providing us the solutions
	to Problem~\ref{problem:downdateHess}.
	
	We note that the latter theorem is stronger than the straightforward
	result in Equation~\ref{eq:RQ_perfectShift}, as the upper row is also
	zero. Also important to note is the structure of the matrix $\hat{Q}$, which
	we will use further on as well. When $H$ is a proper Hessenberg
	matrix, the $\hat{Q}$ factor in the QR factorization will also be proper Hessenberg.
	In case we compute the RQ factorization, it is not hard to verify that
	the unitary factor $\hat{Q}$
	will be upper hessenberg as well.


	
	\subsection{Why the RQ algorithm instead of the QR algorithm}
	When applying a perfectly shifted QR step to a proper Hessenberg
	matrix $H_m$ 
	the perfect shift $\tilde{z}$ will appear on position $(m,m)$. Suppose
	the Hessenberg matrix $H_m$ is a solution to the
	Problem~\ref{problem:HIEP}, i.e. $H_m = Q^H Z Q$ and as a result of this QR step we want to
	end up with a solution to Problem~\ref{problem:downdateHess}. Suppose
	the similarity transformation of the QR algorithm is determined by
	$\hat{Q}$. We end up with something like
	\begin{equation*}
		\hat{Q}^H H_m \hat{Q} = \begin{bmatrix}
			\widetilde{H}_{m-1} & \boldsymbol{0}\\
			\boldsymbol{0} & \tilde{z}
		\end{bmatrix}.
	\end{equation*}
	After this transformation the new eigenvector matrix equals
	$Q\hat{Q}$ and can be deflated to obtain the solution to the downdating
	problem. Taking into consideration that the matrix $\hat{Q}$ is upper
	Hessenberg, we see, however, that the first column does not satisfy the weight
	condition anymore:
	\begin{equation*}
		\hat{Q}^H Q^H w = \hat{Q}^H \Vert w \Vert_2 e_1 = \begin{bmatrix}
			\times & \times \\
			\times & \times & \times \\
			\times & \times & \times  & \times \\
			\vdots & \vdots & \vdots & \vdots & \ddots \\
			\times & \times & \times  & \times & \times & \times \\
			\times & \times & \times  & \times & \times & \times 
		\end{bmatrix} \begin{bmatrix}
			\times \\
			0\\
			0\\
			\vdots\\
			0\\
			0
		\end{bmatrix} =  \begin{bmatrix}
			\times \\
			\times\\
			\times\\
			\vdots\\
			\times\\
			\times
		\end{bmatrix}.
	\end{equation*}
	So, the weight condition is no longer satisfied and the obtained
	Hessenberg matrix (after deflation) $\widetilde{H}_{m-1}$ is not a solution to
	the downdated IEP, as noted by \cite{m275}.
	If, however the RQ decomposition is used instead, then the unitary
	factor $\hat{Q}$ in $H_m-\tilde{z}I = \hat{R}\hat{Q}$ is again upper Hessenberg.
	But now the unitary similarity transformation isolating the eigenvalue
	$\tilde{z}$ from $H_m$ differs slightly in the sense that the Hermitian
	conjugates are positioned elsewhere. We get:
	\begin{equation*}
		\hat{Q} H \hat{Q}^H = \begin{bmatrix}
			\tilde{z} & \boldsymbol{0}\\
			\boldsymbol{0} & \widetilde{H}_{m-1}
		\end{bmatrix},
	\end{equation*}
	where $\widetilde{H}_{m-1}$ is again a Hessenberg matrix with the spectrum
	$\sigma(\widetilde{H}_{m-1}) = \sigma(H_m)\backslash\{\tilde{z} \}$ and thus a potential solution to the downdated IEP.
	Let us look at the weight condition is this case. Note that now we multiply the eigenbasis $Q$ from the right by a lower Hessenberg matrix $\hat{Q}^H$, therefore
	\begin{equation}
		\label{eq:weight:constraint}
		\hat{Q} Q^H w = \hat{Q} \Vert w \Vert_2 e_1 = \begin{bmatrix}
			\times & \times & \times  & \times & \times & \times \\
			\times & \times & \times  & \times & \times & \times \\
			& \times & \times  & \times & \times & \times \\
			& 	    & \ddots  & \vdots & \vdots & \vdots \\
			& 	    & 		  & \times & \times & \times \\
			& 	    & 		  & 	   & \times & \times 
		\end{bmatrix} \begin{bmatrix}
			\times \\
			0\\
			0\\
			\vdots\\
			0\\
			0
		\end{bmatrix} =  \begin{bmatrix}
			\times \\
			\times\\
			0\\
			\vdots\\
			0\\
			0
		\end{bmatrix}.
	\end{equation}
	Deflating the matrix $\hat{Q} H_m \hat{Q}^H$ to obtain $\widetilde{H}_{m-1}$,
	results in omitting the first element of $\hat{Q} Q^H w$, which results in $\Vert \tilde{w}\Vert_2 e_1$, with $e_1\in\mathbb{C}^{m-1}$.
	This verifies that $\widetilde{H}_{m-1}$ is a solution to the downdating problem.
	
	\subsection{Theoretical equivalent statements}
	The perfect shift RQ step described above can be implemented in three
	different ways, see \cite{MaVD18}.
	These variants are based on Theorem
	\ref{theorem:RQ_equivalent}. To state the theorem, we first need to
	define a core transformation.
	
	\begin{definition}[Core transformations]\label{def:coreTransformations}
		A core transformation $C_i\in \mathbb{C}^{m\times m}$ is a unitary matrix of the form
		\begin{equation}
			C_i = \begin{bmatrix}
				I_{i-1} \\
				& \times & \times \\
				& \times & \times \\
				& & & I_{m-i-1}\\
			\end{bmatrix},
		\end{equation}
		where $I_{k}$ denotes the identity matrix of size $k\times k$.
	\end{definition}
	Core transformations are essentially $2\times 2$ matrices, since their
	only active part is a $2\times 2$ diagonal block. The parameter $i$ in
	$C_i$ indicates where, on the diagonal, the active block appears, and
	$\mathfrak{C}_i$ denotes the class of all these core transformations.
	
	
	\begin{theorem}[Theorem 2.1 in \cite{MaVD18}]\label{theorem:RQ_equivalent}
		Let $H\in\mathbb{C}^{m\times m}$ be a proper Hessenberg
		matrix and $\lambda \in \sigma(H)$ one of the distinct eigenvalues.
		Then the following statements hold:
		\begin{enumerate}
			\item $H$ has a normalized eigenvector $x$ corresponding to $\lambda$
			\begin{equation*}
				H x = \lambda x,\quad \Vert x \Vert_2 = 1
			\end{equation*}
			which is unique up to unimodular scaling and $e_m^\top x \neq 0$.
			\item An essentially unique sequence of $\{C_i\}_{i=1}^{m-1}$, $C_i \in\mathfrak{C}_i$ forming the matrix
			\begin{equation*}
				\hat{Q} := C_1 C_2 \dots C_{m-1}
			\end{equation*}
			exists that transforms the pair $(H,x)$ to a similar one
			\begin{equation*}
				(\hat{H},\hat{x}) := (\hat{Q} H \hat{Q}^H,\hat{Q} x)
			\end{equation*}
			with
			\begin{equation*}
				\hat{x} = \alpha e_1, \quad \vert \alpha \vert =1, \quad \hat{H} e_1 = \lambda e_1 \text{ and } \hat{H} \text{ is a Hessenberg matrix}.
			\end{equation*}
			\item The Hessenberg matrix $H-\lambda I$ has the RQ decomposition
			\begin{equation*}
				H- \lambda I = \hat{R} \hat{Q},
			\end{equation*}
			where $e_1^\top \hat{R} e_1 = 0$ and $\hat{Q}$ is essentially the same matrix as the one transforming $x$ to $\hat{x}$.
		\end{enumerate}
	\end{theorem}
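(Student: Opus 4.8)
The plan is to establish the three statements in the order 1, 3, 2, since once the structure of the unitary RQ factor in part~3 is understood, part~2 drops out almost immediately. For statement~1 I would argue as follows: simplicity of $\lambda$ makes $\ker(H-\lambda I)$ one-dimensional, so a unit eigenvector $x$ exists and is unique up to a unimodular scalar. To get $e_m^\top x\neq0$ I would observe that the submatrix of $H-\lambda I$ built from rows $2,\dots,m$ and columns $1,\dots,m-1$ is upper triangular with diagonal $h_{2,1},\dots,h_{m,m-1}$, all nonzero by properness, hence nonsingular; then a back-substitution through $(H-\lambda I)x=0$ from the bottom row upward shows that $x_m=0$ would force $x_{m-1}=x_{m-2}=\dots=x_1=0$, contradicting $x\neq0$.

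For statement~3 I would construct the RQ decomposition explicitly by applying core transformations on the right: sweeping $i$ from $m-1$ down to $1$, pick $C_i\in\mathfrak{C}_i$ so that right-multiplication by $C_i^H$ annihilates the $(i+1,i)$ entry of the current matrix, and set $\hat R:=(H-\lambda I)C_{m-1}^H\cdots C_1^H$ and $\hat Q:=C_1\cdots C_{m-1}$. I would check that when position $(i+1,i)$ is processed, column $i$ is still untouched and no fill-in has appeared below row $i+1$, so the entry being zeroed is exactly $h_{i+1,i}\neq0$; this makes $\hat R$ upper triangular, pins down each $C_i$ up to a unimodular factor, and renders $\hat Q=C_1\cdots C_{m-1}$ upper Hessenberg. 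Finally, since $H-\lambda I$ is singular so is $\hat R$, and the argument already used in the proof of Lemma~\ref{lemma:RQ_perfectShift} (the last $m-1$ rows of a singular Hessenberg matrix are linearly independent) forces $r_{2,2},\dots,r_{m,m}\neq0$, so the only vanishing pivot is $e_1^\top\hat R e_1=r_{1,1}=0$.

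For statement~2 I would take the very $\hat Q$ from part~3, set $\hat H:=\hat Q H\hat Q^H$ and $\hat x:=\hat Q x$, and note $\hat H-\lambda I=\hat Q(H-\lambda I)\hat Q^H=\hat Q\hat R$; this is an upper Hessenberg matrix times an upper triangular one, hence upper Hessenberg, so $\hat H$ is a Hessenberg matrix. Since $r_{1,1}=0$ the first column of $\hat R$ vanishes, giving $(\hat H-\lambda I)e_1=0$, i.e. $\hat He_1=\lambda e_1$. And $(\hat H-\lambda I)\hat x=\hat Q(H-\lambda I)x=0$ puts $\hat x$ in $\ker(\hat H-\lambda I)$, which is one-dimensional and already contains $e_1$, so $\hat x=\alpha e_1$ with $|\alpha|=1$ by unitarity of $\hat Q$. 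This proves statement~2 and simultaneously shows that the matrix transforming $x$ to $\hat x$ coincides with the RQ factor, which is the last clause of statement~3; essential uniqueness of the triple $(\hat Q,\hat H,\hat x)$ is then inherited from that of $\{C_i\}$.

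The main obstacle is not conceptual but bookkeeping. The two places that demand care are: making the phrase \emph{essentially unique} precise by tracking the unimodular scaling freedom in each core transformation through the sweep, and verifying rigorously that sweeping the subdiagonal upward never meets a spurious zero pivot — this is exactly where properness of $H$ together with the absence of fill-in below the running subdiagonal enter — so that $r_{1,1}$ is genuinely the unique zero on the diagonal of $\hat R$. Both are handled with the same singular-Hessenberg observation already invoked for Lemma~\ref{lemma:RQ_perfectShift}, and no new idea beyond it is needed.
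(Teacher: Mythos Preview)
The paper does not supply its own proof of this theorem; it attributes the result to \cite{MaVD18} and merely remarks that their argument ``builds entirely on matrix structures,'' with Krylov subspaces and the implicit~$Q$ theorem as an alternative route. Your proposal is correct and follows precisely the matrix-structures approach the paper alludes to: you build the RQ factor column-by-column via core transformations, invoke the same last-$m{-}1$-rows argument already used for Lemma~\ref{lemma:RQ_perfectShift} to locate the single zero pivot at $r_{1,1}$, and then read off statement~2 from $\hat H-\lambda I=\hat Q\hat R$.

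One small point worth tightening: your uniqueness claim for statement~2 is phrased as being ``inherited'' from the uniqueness of the RQ factor, but strictly speaking you should also check that \emph{any} descending product $C_1\cdots C_{m-1}$ sending $x$ to a multiple of $e_1$ must coincide with the RQ factor. This follows immediately from $e_m^\top x\neq 0$: applying the $C_i$ from $i=m-1$ down, each rotation must annihilate a component that is forced nonzero because it has absorbed $|x_m|^2>0$, so the sequence is pinned down step by step. You flag this bookkeeping yourself, so the gap is cosmetic rather than substantive.
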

	
	The proof is due to \cite{MaVD18}, and builds entirely on matrix structures. An
	alternative way to obtain the desired results is looking at
	Krylov subspaces and the associated implicit Q theorem, as
	also stated in their remark. For downdating orthogonal
	rational functions, a similar theorem is required, but the
	proof will build upon rational Krylov subspaces. 
	
	
	
	
	

	\section{Numerical algorithms for downdating polynomials}
	\label{sec:down:poly}
	
	Following Theorem~\ref{theorem:RQ_equivalent} there are three options to compute and execute the similarity transformation. 
	Mathematically they are equivalent, but numerically they behave differently.
	\begin{itemize}
		\item Compute $\hat{Q}$ via the RQ factorization and execute the similarity
		transformation with $\hat{Q}$. This is known as the explicit algorithm.
		\item Compute $\hat{Q}$ via the factorization of the eigenvector and
		execute the similarity transformation with $\hat{Q}$. This is named the eigenvector method in this paper.
		\item Compute the first core transformation $C_1$, from the RQ factorization,
		compute the remaining core transformations in order to restore the
		Hessenberg structure. This is named the implicit algorithm.
	\end{itemize}
	Explicit and implicit QR algorithms are described in various
	textbooks, a thorough reference is \cite{b333}.
	The \emph{eigenvector method}, proposed by \cite{MaVD18},
	is based on statement 2 in Theorem~\ref{theorem:RQ_equivalent} in order to accurately deflate a
	particular eigenvalue. Their procedure leads to a more accurate
	isolation of the given eigenvalue, on the condition that the
	eigenvector is computed with sufficient accuracy.
	
	We compare the three methods, named respectively \emph{explicit matrix method},
	\emph{eigenvector method}, and the \emph{implicit matrix method}.
	In the beginning of Section~\ref{Hess:eig} we mentioned the various
	numerical challenges.
	The three algorithms behave differently, and
	we investigate numerically which method is the most appropriate in the downdating setting.

	\subsection{The explicit matrix method}\label{sec:downdatePoly_matrix}
	The explicit version was used by \cite{m275} to
	downdate a unitary matrix and is
	rather straightforward.
	Compute the RQ decomposition: $H_m-\tilde{z} I = \hat{R}\hat{Q}$, which in exact arithmetic leads to \eqref{eq:RQ_perfectShift} and Theorem \ref{theorem:downdateHess} is valid. 
	Compute the downdated matrix $\widetilde{H}_{m-1}$, which is the
	$(m-1)\times (m-1)$ trailing principal submatrix of $\hat{Q} H \hat{Q}^H = \hat{Q}\hat{R} +
	\tilde{z} I$. 
	More precisely we get, for $\tilde{\epsilon}, \epsilon_z$ and $\epsilon_i$, elements of the size of machine precision,
	\begin{equation}
		\label{eq:H:1}
		\hat{Q}\hat{R} + \tilde{z}I = \begin{bmatrix}
			\tilde{z} + \epsilon_z & {\epsilon}_1 & {\epsilon}_2 & \dots& {\epsilon}_{m-2} & {\epsilon}_{m-1}\\
			\tilde{\epsilon}	&\times & \times & \dots  & \times & \times \\
			&\times & \times & \dots  & \times & \times \\
			&& \times &  \dots  & \times & \times \\
			&& 	 	   & \ddots 		  & \vdots & \vdots \\
			&& 	    & 		& \times & \times \\
		\end{bmatrix}.
	\end{equation}
	The formula $\hat{Q}\hat{R}$ is preferred over $\hat{Q} H_m \hat{Q}^H$ for
	numerical computation, since $\hat{Q}\hat{R}$ will have exact
	Hessenberg structure, whereas $\hat{Q} H_m \hat{Q}^H$ will have small
	elements, due to round-off, on the second subdiagonal. As a
	consequence, only one numerical issue remains, that is the lack of
	block diagonal structure of the transformed matrix:
	We have explicitly put an
	$\tilde{\epsilon}$ to emphasize the numerical error. If
	$\tilde{\epsilon}$ is too large, we run into problems, and we can not
	split the problem into two subproblems. 
	We can remedy this problem, by executing a second step of the explicit
	RQ method, unfortunately we will see that a second step does not only
	double the work, but also perturbs the desired structure of the weight
	vector and even more work is required to restore that structure.
	The resulting Hessenberg matrix is of the form given in
	Equation~\ref{eq:H:1} and executing the transformation $\hat{Q}$, with
	elements $\hat{q}_{ij}$ has
	the following effect on the vector of weights:
	\begin{equation*}
		\hat{Q}\begin{bmatrix}
			\Vert w \Vert_2 \\
			0 \\
			0\\
			0\\
			\vdots\\
			0
		\end{bmatrix} =  \begin{bmatrix}
			\Vert w \Vert_2\, \hat{q}_{1,1} \\
			\Vert w \Vert_2\, \hat{q}_{2,1} \\
			0\\
			0\\
			\vdots\\
			0
		\end{bmatrix}.
	\end{equation*}
	As mentioned before in Equation~\ref{eq:weight:constraint}, this would
	lead to the desired solution of Problem \ref{problem:downdateHess}, since the
	first element will be omitted when deflating the matrix in the bottom
	right corner.
	
	Assume, however, that the element $\tilde{\epsilon}$ in Equation~\ref{eq:H:1} is not
	small enough, and there is the need to execute another step of the
	explicit QR algorithm to reduce the size of this element\footnote{Note that this is typically the case in practical QR algorithms. It requires more than one step to get convergence.}.
	Let us for simplicity denote the extra similarity to be executed with
	the matrix $\hat{\hat{Q}}$. We get, denoting the elements of
	$\hat{\hat{Q}}$ by $\hat{\hat{q}}_{ij}$:
	\begin{equation}
		\label{eq:ww}
		\hat{\hat{Q}}  \hat{Q}\begin{bmatrix}
			\Vert w \Vert_2 \\
			0 \\
			0\\
			0\\
			\vdots\\
			0
		\end{bmatrix} =  \begin{bmatrix}
			\Vert w \Vert_2\, \hat{q}_{1,1}\hat{\hat{q}}_{11}  +
			\Vert w \Vert_2\, \hat{q}_{2,1} \hat{\hat{q}}_{12} \\
			\Vert w \Vert_2\, \hat{q}_{1,1}\hat{\hat{q}}_{21}  +
			\Vert w \Vert_2\, \hat{q}_{2,1} \hat{\hat{q}}_{22} \\
			\Vert w \Vert_2\, \hat{q}_{2,1} \hat{\hat{q}}_{23} \\
			0\\
			\vdots\\
			0
		\end{bmatrix}.
	\end{equation}
	
	If the resulting Hessenberg matrix $\hat{\hat{H}}$ has a sufficiently small element in position $(2,1)$ we can deflate the Hessenberg matrix in
	the lower right corner. Unfortunately after deflating \eqref{eq:ww} the resulting vector will have two nonzero elements instead of a single one in the vector on the right-hand side.
	
	Let us examine the structure of the matrices in more detail. After
	having executed both transformations and having executed the
	deflation we end up with $\tilde{Q}^H \tilde{Z} \tilde{Q} =
	\tilde{H}_{m-1}$, but unfortunately, we see that $\tilde{Q}^H \tilde{w} \neq
	\|\tilde{w}\|_2 e_1$.	
	As a consequence we need a procedure, or another unitary
	similarity transformation, say $C$, such that 
	$\tilde{\tilde{Q}}^H \tilde{Z} \tilde{\tilde{Q}} = C^H \tilde{Q}^H \tilde{Z} \tilde{Q} C =
	\tilde{\tilde{H}}_{m-1}$, such that $C^H \tilde{Q}^H \tilde{w} =
	\|\tilde{w}\|_2 e_1$.
	
	This procedure is very similar to the Hessenberg updating procedure already
	described by \cite{VBVBVa22}. Let us sketch the
	algorithm on a high level.  The vector $\tilde{Q}^H \tilde{w}$ has two leading
	nonzero elements. A single rotation $C_1^H$ can transform
	$\tilde{Q}^H\tilde{w}$ to a multiple of $e_1$. We get $C_1^H
	\tilde{Q}^H\tilde{w}=\|\tilde{w}\|_2 e_1$. Executing the similarity transformation $C_1^H \tilde{Q}^H \tilde{Z} \tilde{Q} C_1$
	disturbs the Hessenberg structure: the element $\otimes$ gets
	introduced 
	\begin{equation*}
		K_1=
		C_1^H \tilde{Q}^H \tilde{Z} \tilde{Q} C_1
		=
		\left[\begin{array}{cccccc}
			\times & \times & \times & \times & \cdots & \times \\
			\times & \times & \times & \times & \cdots & \times \\
			\otimes & \times & \times & \times & \cdots &\times \\
			& & \times & \times & \cdots &\times \\
			& & & \ddots & & \vdots
		\end{array}\right].
	\end{equation*}
	The unitary transformation $C_2$ is constructed to operate on rows $2$
	and $3$ and annihilate the $\otimes$ element by operating on the
	left, i.e. $C_2^H C_1^H \tilde{Q}^H \tilde{Z} \tilde{Q} C_1$ is again
	of Hessenberg form.  Also $C_2^H C_1^H
	\tilde{Q}^H\tilde{w}=\|\tilde{w}\|_2 e_1$ still holds, as $C_2$ operates
	on rows $2$ and $3$. But, executing a similarity imposes a
	multiplication with $C_2$ on the right of
	the matrix $K_1$ as well. As a structure we get
	\begin{equation*}
		K_2=
		C_2^H C_1^H \tilde{Q}^H \tilde{Z} \tilde{Q} C_1 C_2
		=
		\left[\begin{array}{cccccc}
			\times & \times & \times & \times & \cdots & \times \\
			\times & \times & \times & \times & \cdots & \times \\
			& \times & \times & \times & \cdots &\times \\
			& \otimes & \times & \times & \cdots &\times \\
			& & & \ddots & & \vdots
		\end{array}\right].
	\end{equation*}
	Clearly this procedure can be continued, until the bulge, marked with
	$\otimes$ slides off the matrix. To fully restore the structure we need
	$m-2$ similarity transformations. Denoting $C=C_1\cdots C_{m-2}$ and $\tilde{\tilde{H}}_{m-1}=K_{m-2}$ we
	see that we have solved Problem~\ref{problem:downdateHess}.

		\subsection{The implicit matrix method}
	To initiate the implicit method, we compute the trailing row of
	$H_m-\lambda I$. Because of the Hessenberg structure, the vector $e_m^T(H_m-\lambda
	I)=v^T$ has only two nonzero elements in positions $m-1$ and $m$. To compute the $RQ$
	factorization of $H_m-\lambda I$ a rotation $C_{m-1}\in\mathfrak{C}_{m-1}$ must be created,
	such that $v^T C_{m-1}= \|v\|_2 e_m^T$.
	
	Next the similarity transformation with $C_{m-1}$ is executed on $H_m$. We end up with
	$C_{m-1} H_m C_{m-1}^H$. The resulting matrix is not exactly of
	Hessenberg form anymore. It is perturbed in the second subdiagonal, in
	the last row.
	\begin{equation*}
		\left[\begin{array}{ccccccc}
			\ddots & &&  & & \vdots \\
			& \times &   \times      & \times & \times & \times \\
			&   &  \times      & \times & \times & \times \\
			&    &       & \times & \times & \times \\
			&     &      &\otimes & \times & \times
		\end{array}\right].
	\end{equation*}
	Now we aim at restoring the structure of the Hessenberg matrix, in a
	similar manner as we did in Section~\ref{sec:downdatePoly_matrix},
	but now with the difference that we chase upwards, instead of
	downwards. Theorem~\ref{theorem:RQ_equivalent} then proves that after restoring
	the Hessenberg structure, we have executed one RQ step. By
	construction the Hessenberg matrix is numerically of Hessenberg
	form. Hence the only numerical issue is identical to the one in the
	explicit QR method, the off-diagonal element might not be small
	enough to get the block diagonal structure. Nevertheless, we can do exactly the same trick as in the
	explicit RQ case, that is, do one more step of the implicit method,
	and then restore the structure of the weight vector.

	\subsection{The eigenvector method}\label{sec:downdatePoly_MaVD}
	\cite{MaVD18} propose an alternative
	procedure to perform a perfectly shifted RQ step  leading to a more
	accurate construction of the rotations being executed in the
	similarity transformation. This leads to a more accurate isolation of
	the given eigenvalue in a single step. Recall that in the explicit and
	implicit algorithm typically two steps need to be executed followed by
	a procedure to restore the link between the unitary transformation and
	the weight vector.\\
	The eigenvector method consists of three steps.
	Assume we have a number $\tilde{z}$, close enough to an eigenvalue of
	the matrix $H_m$ (which in theory should be perfect in our setting).
	The first step is to compute the (unit) eigenvector $x$ such that $(H_m-\tilde{z}
	I)x$ is sufficiently small, that is
	\begin{equation}\label{eq:evec_quality}
		\Vert (H_m-\tilde{z}I)x\Vert_2 \approx \epsilon_{\textrm{mach}} \Vert H_m - \tilde{z}I \Vert_2.
	\end{equation}
	In our setting the eigenvector $x$ can be computed from the matrix of recurrences $H_m$ and the weight vector $w$.
	The relation between eigenvector $x$ of $H_m$ and the sequence of OPs $\{p_j\}_{j=0}^{m-1}$ evaluated in the corresponding eigenvalue $\tilde{z}$ is described by, e.g., \cite{GoWe69}. 
	If $\tilde{z} = z_j$ and $w_j$ the corresponding weight, then $x = \bar{w}_j \begin{bmatrix}
		p_0(z_j) & p_1(z_j) & \dots & p_{m-1}(z_j)
	\end{bmatrix}^H$ and the polynomials $p_j(z)$ can be evaluated in $z=z_j$ by running the recurrence relation given by the Hessenberg matrix $H_m$.
	Afterwards, if condition \eqref{eq:evec_quality}, more precisely we will use $\Vert (H_m-\tilde{z}I)x\Vert_2 \leq 3 \epsilon_{\textrm{mach}} \Vert H_m - \tilde{z}I \Vert_2$, is not satisfied, the accuracy of $x$ can be improved by performing several steps of iterative refinement.
	The second step uses the eigenvector $x$  to compute an eigenvector $\dot{x}$ satisfying
	\begin{equation}\label{eq:cond_47}
		\left\Vert \begin{bmatrix}\hat{\epsilon}_1 & \hat{\epsilon}_2 & \dots &\hat{\epsilon}_m		\end{bmatrix}\right\Vert_2 \leq \epsilon_{\textrm{mach}} \Vert H_{m}\Vert_\textrm{F},
	\end{equation}
	 with  $\hat{\epsilon}_i = \frac{e_i^\top ((H_{m}-\tilde{z}I)\dot{x})}{\left\Vert \begin{bmatrix}
		\dot{x}_{i-1} & \dot{x}_i & \dots & \dot{x}_{m}
	\end{bmatrix} \right\Vert_2}$ and the convention $\dot{x}_{0} := 0$.
	Condition \eqref{eq:cond_47} is a way to measure the accuracy of the eigenvector $\dot{x}$ which places greater importance in the trailing entries.
	This is necessary because the perfect RQ shift requires an eigenvector that is especially accurate in its trailing entries, for details and how to obtain such an eigenvector we refer to \cite{MaVD18}.
	In the third step, which is the perfect RQ step, the unitary similarity transformation is executed.
	Transform the eigenvector $\dot{x}$ to $e_1 = \begin{bmatrix}
		1 & 0 & \dots & 0
	\end{bmatrix}^\top$ by a sequence of rotations, i.e., compute $C_i\in\mathfrak{C}_i$ such that $\prod_{i=m-1}^{1} C_i  x = e_1$. 
	Set $\hat{Q} = \prod_{i=1}^{m-1} C_i$, then by Theorem~\ref{theorem:RQ_equivalent}
	it remains to execute  a unitary similarity transformation with
	$\hat{Q}$ in order to get
	\begin{equation*}
		\hat{Q} H_m \hat{Q}^H =\begin{bmatrix}
			\tilde{z} & \boldsymbol{0}\\
			\boldsymbol{0} & \widetilde{H}_{m-1}
		\end{bmatrix}.
	\end{equation*}

	\section{Numerical experiments}\label{sec:numExp:poly}
	The three numerical algorithms are used to solve Problem \ref{problem:downdateHess} and compared in terms of four metrics.
	Starting from a solution $(H_m,Q)$ to Problem~\ref{problem:HIEP}, $\ell$ nodes are downdated one by one, resulting after each downdating step in the solutions $\tilde{H}_{m-1},\tilde{H}_{m-2},\dots, \tilde{H}_{m-\ell}$ to Problem~\ref{problem:downdateHess}.
	The four metrics measure how accurate $\tilde{H}_{m-k}$ represents a sequence of OPs.
	For the computation of these metrics we also need to compute the unitary matrices in each downdating step. We denote these as $\tilde{Q}_{m-1},\tilde{Q}_{m-2},\dots,\tilde{Q}_{m-\ell}$.
	We stress that the computation of $Q$ and $\tilde{Q}_{m-k}$ is not required in any of the three numerical algorithms, their only purpose is to compute the following metrics.
	Denote by $\tilde{Z}$ the appropriately redefined matrix of nodes, i.e.,  $Z$ where the downdated nodes are omitted.
	\begin{enumerate}
		\item The \textit{orthogonality error} measures the orthogonality of the basis represented by $\tilde{Q}_{m-k}$:
		\begin{equation}\label{eq:err_o}
			\textrm{err}_{\textrm{o}} := \Vert \tilde{Q}_{m-k}^H \tilde{Q}_{m-k} - I\Vert_2.
		\end{equation}
		\item The \textit{recurrence error} is a metric for the accuracy of the recurrence coefficients in the matrix of recurrences $\tilde{H}_{m-k}$:
		\begin{equation*}\label{eq:err_r}
			\textrm{err}_{\textrm{r}} := \frac{\Vert \tilde{Z} \tilde{Q}_{m-k} - \tilde{Q}_{m-k} \tilde{H}_{m-k}\Vert _2}{\max\left(\Vert \tilde{Z}\tilde{Q}_{m-k} \Vert_2, \Vert \tilde{Q}_{m-k} \tilde{H}_{m-k} \Vert_2 \right)}.
		\end{equation*}
		\item The \textit{weight error} compares the given weight vector $\tilde{w}/\Vert\tilde{w}\Vert_2$ with the weights $\tilde{Q}_{m-k} e_1$: 
		\begin{equation}\label{eq:err_w}
			\textrm{err}_\textrm{w} := \Vert \Vert \tilde{w} \Vert_2 \tilde{Q}_{m-k} e_1 - \tilde{w} \Vert_2
		\end{equation}
		\item The \textit{node error} quantifies how close the eigenvalues of the matrix of recurrences $\tilde{H}_{m-k}$ are to the diagonal elements in $\tilde{Z}$. Denote the eigenvalues of $\tilde{H}_{m-k}$ by $\{\lambda_j\}_{j=1}^{m-k}$, sorted such that they correspond to $e_j^\top\tilde{Z}e_j \approx \lambda_j$, then the node error is defined as:
		\begin{equation}\label{eq:err_nodes}
			\textrm{err}_{\textrm{node}} = \max_{j} \vert e_j^\top \tilde{Z} e_j-\lambda_j\vert.
		\end{equation}
	\end{enumerate}
	The orthogonality and recurrence error provide an indication for the quality of the orthogonal polynomials that are represented by $\tilde{Q}_{m-k}$ and $\tilde{H}_{m-k}$, their orthogonality and the quality of the recurrence coefficients, respectively.
	The inner product that is encoded in $\tilde{H}_{m-k}$ and $\tilde{Q}_{m-k}$, i.e., the discrete inner product with as nodes the eigenvalues of $\tilde{H}_{m-k}$ and as weights the first column of $\tilde{Q}_{m-k}$, is compared to the inner product defined by the given nodes and weights by means of the weight and node error.\\ 
	The matrix methods are implemented as they are described in Section \ref{sec:down:poly}. 
	For a matrix method with 2 steps, elements denoted by $\otimes$ which are (numerically) eliminated during the bulge chase are explicitly set to zero.
	This is important, since this guarantees that we end up with a matrix that has Hessenberg structure and therefore represents a sequence of OPs.\\
	The behavior of the eigenvector method is studied in more detail by tracking the two conditions in Equation \ref{eq:evec_quality} and Equation \ref{eq:cond_47}.
	In the remainder of this section we apply the numerical algorithms to three experiments.
	We will consider the following methods: the explicit matrix method with 1 step (${\color{blue}\ast}$), the implicit matrix method with 1 step (${\color{red}\circ}$) and 2 steps (${\color{black}\circ}$) and the eigenvector method with $n_{\textrm{IR}}$ as the maximum number of iterative refinement steps (${\color{green}+}$).

	\subsection{Unit circle}\label{sec:exp:polyUC}
	An experiment performed by \cite{VBVBVa22} in the context of updating inverse eigenvalue problems is repeated here.
	Consider $m$ nodes chosen equidistant on the unit circle $\{z_j\}_{j=1}^m$, they are chosen in a balanced way by always taking the next one as far away from the other ones as possible.
	This is illustrated in Figure \ref{fig:balancedCircle} for $m=4,5$ and $6$.
	\begin{figure}[!ht]
		\centering
		\includegraphics{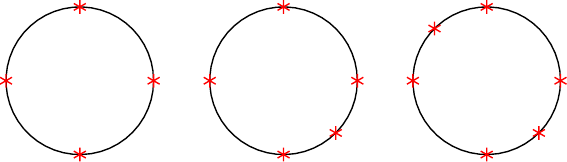}
		\caption{Balanced order of placing $m$ equidistant nodes ${\color{red} \ast}$ on a circle for $m=4,5$ and $6$.}
		\label{fig:balancedCircle}
	\end{figure}
	We start from an available solution $H_m,Q_m\in\mathbb{C}^{m\times m}$ that is obtained by applying the Arnoldi iteration to the matrix $\textrm{diag}(z_1,\dots,z_m)$ and vector $w = \frac{1}{\sqrt{m}}\begin{bmatrix}
		1&
		\dots&
		1
	\end{bmatrix}^\top$.
	We downdate $m/2$ nodes in the reverse balanced order, which leaves $m/2$ equidistant nodes on the unit circle.
	For $m=500$, $n_{\textrm{IR}}=1$, the metrics for $\tilde{H}_{m-k}$ are shown in Figure \ref{fig:UC_opt_m500}.
	\begin{figure}[!ht]
		\centering
		\includegraphics{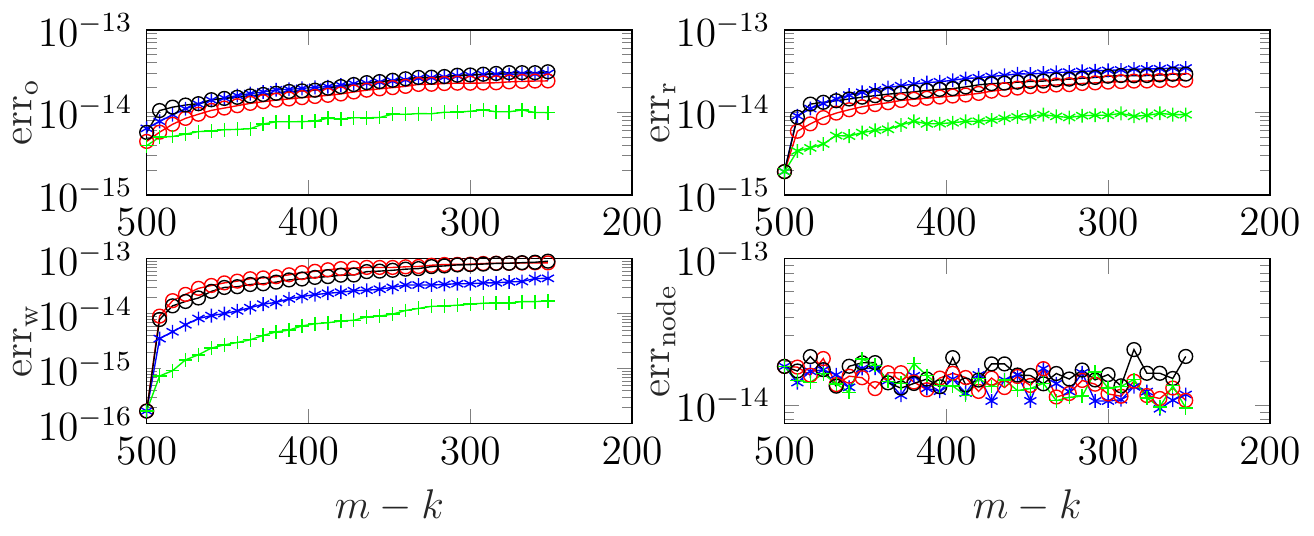}
		\caption{Unit circle experiment with $m=500$, $n_{\textrm{IR}}=1$ and a balanced order of downdating. Metrics comparing the explicit matrix method: ${\color{blue}\ast}$, implicit matrix method: 1 step $\color{red} \circ$, 2 steps $\color{black}\circ$, and eigenvector method with $n_{\textrm{IR}}=1$: $\color{green}+$.}
		\label{fig:UC_opt_m500}
	\end{figure}
	The metric $\textrm{err}_\textrm{node}$ is very similar for all methods and indicates that the nodes are preserved equally well for all methods.
	When comparing the methods for the other three metrics it is clear that the eigenvector method performs best.
	In Figure \ref{fig:UC_opt_m500_emeth} on the left we see that there are given eigenvectors which are far from satisfying \eqref{eq:evec_quality}, however, we succeed for almost all of these given eigenvectors in producing an eigenvector $\dot{x}$ satisfying \eqref{eq:cond_47}, shown on the right of the figure.
	
	\begin{figure}[!ht]
		\centering
		\includegraphics{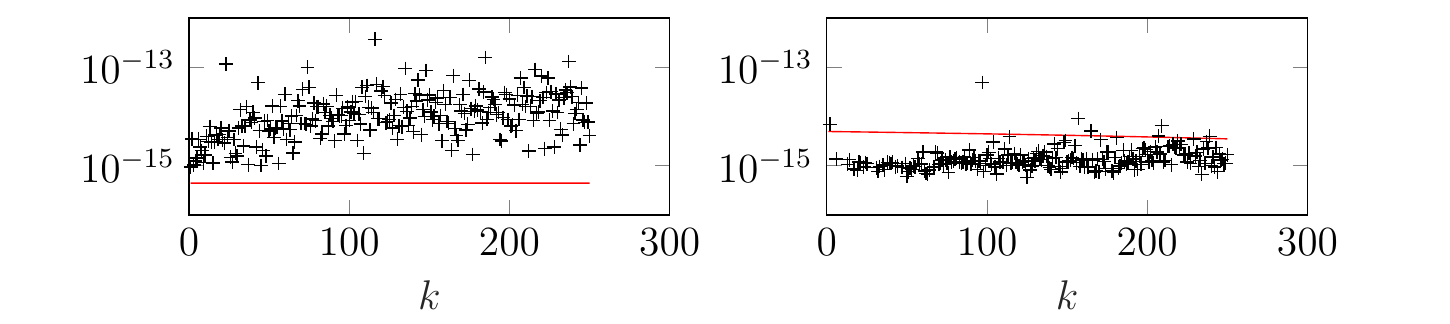}
		\caption{Unit circle experiment with $m=500$, $n_{\textrm{IR}}=1$ and a balanced order of downdating. The conditions \eqref{eq:evec_quality} and  \eqref{eq:cond_47}, with the left hand side ${\color{black} + }$ and right hand side ${\color{red}-}$.}
		\label{fig:UC_opt_m500_emeth}
	\end{figure}
	The implicit method with two steps does not perform better than the implicit method with one step when comparing the metrics shown in Figure \ref{fig:UC_opt_m500}.
	However, it produces a coefficient matrix $\tilde{H}_{m/2}$ with more accurate matrix properties: Namely, for nodes on the unit circle the resulting Hessenberg coefficient matrix is unitary.
	In Table \ref{table:UC} we compare how close the generated matrix $\tilde{H}_{m/2}$ is to a unitary matrix by the quantity $\Vert \tilde{H}_{m/2}^H \tilde{H}_{m/2} - I\Vert_2$.
	There is a clear improvement for the implicit matrix method when executing two RQ steps instead of one.
	\begin{table}[!ht]
		\renewcommand{\arraystretch}{1.3}
		\begin{tabular}{l|cccc}
			& Explicit    & Implicit    & Implicit 2 steps & Eigenvector \\ \hline
			{$\Vert \tilde{H}_{m/2}^H \tilde{H}_{m/2} - I\Vert_2 $} & $2.79\mathrm{e}{-14}$ & $3.44\mathrm{e}{-14}$ & $1.59\mathrm{e}{-14}$        & $1.63\mathrm{e}{-14}$
		\end{tabular}
		\caption{Metric $\Vert \tilde{H}_{m/2}^H \tilde{H}_{m/2} - I\Vert_2 $ measuring the orthogonality of the Hessenberg matrix $\tilde{H}_{m/2}$ obtained by downdating $250$ nodes of a unitary Hessenberg matrix $H_m$ with $m=500$ using four methods.}
		\label{table:UC}
	\end{table}
	
	\subsection{Chebyshev nodes}
	In the following experiment we use a matrix of recurrences $H_m\in \mathbb{R}^{m\times m}$ that is known analytically.
	Namely, for weights $w_j = 1$, $j=1,\dots,m$, and Chebyshev nodes $\{z_j\}_{j=1}^{m}$, i.e., the roots of Chebyshev polynomials of the first kind $z_j = \cos\left(\frac{\pi (j-1/2)}{m}\right)$, the matrix is
	\begin{equation*}
		H_m = \begin{bmatrix}
			0 & 1/\sqrt{2} \\
			1/\sqrt{2} & 0 & 1/2\\
			& 1/2 & 0 & 1/2\\
			& & 1/2 & 0 & \ddots\\
			& & & \ddots & \ddots & 1/2\\
			& & & & 1/2 & 0
		\end{bmatrix}.
	\end{equation*}
	Half of the nodes are downdated from the inner product, the downdated nodes are chosen in two ways, balanced and unbalanced. 
	The balanced  choice is obtained by interpreting $z_j$ as the real part of a point on the upper half of the unit circle and then downdate in the same manner as is described in the above experiment.
	The unbalanced choice is unbalanced in the sense that there will be nodes close to each other on the unit circle which are downdated consecutively.\\
	For $m=500$, $n_{\textrm{IR}}=1$ and a balanced choice Figure~\ref{fig:cheb_opt_m500} compares the four methods, again the eigenvector method performs best.
	\begin{figure}[!ht]
		\centering
		\includegraphics{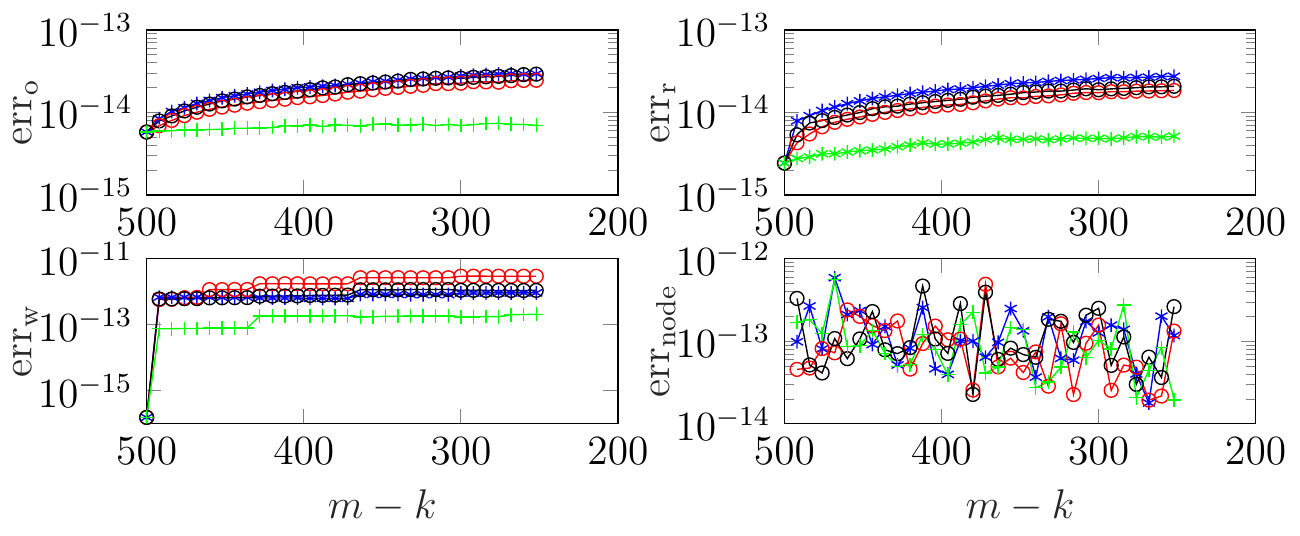}
		\caption{Chebyshev experiment with $m=500$ and a balanced order of downdating. Metrics comparing the explicit matrix method: ${\color{blue}\ast}$, implicit matrix method: 1 step $\color{red} \circ$, 2 steps $\color{black}\circ$, and eigenvector method with $n_{\textrm{IR}}=1$: $\color{green}+$.}
		\label{fig:cheb_opt_m500}
	\end{figure}
	
	Since all nodes are real, the resulting Hessenberg matrix must be a tridiagonal matrix.
	In Table \ref{table:tridiag} the Euclidean norm of the matrix $\tilde{H}_{m/2}$ after omitting its diagonal and first sub-and superdiagonal is used to quantify how close $\tilde{H}_{m-k}$ is to a tridiagonal matrix.
	\begin{table}[!ht]
		\renewcommand{\arraystretch}{1.3}
		\begin{tabular}{l|cccc}
			& Explicit     & Implicit     & Implicit 2 steps & Eigenvector  \\ \hline
			$\Vert$\verb|triu(|$\tilde{H}_{m/2}$\verb|,2)|$\Vert_2$ & $1.46\mathrm{e}{-14}$ & $1.40\mathrm{e}{-15}$ & $3.69\mathrm{e}{-16}$     & $2.13\mathrm{e}{-15}$
		\end{tabular}
		\caption{Chebyshev experiment with $m=500$, and $n_{\textrm{IR}}=1$ after $m/2$ steps of downdating with a balanced choice for nodes. Nearness of $\tilde{H}_{m/2}$ to being tridiagonal is shown, by taking two-norm of the matrix $\tilde{H}_{m/2}$ after omitting its diagonal and first sub-and superdiagonal.}
		\label{table:tridiag}
	\end{table}
	
	For the next experiment we choose $m=200$, $n_\textrm{IR} = 1$ and an unbalanced order.
	Figure~\ref{fig:cheb_subopt_m200} shows a breakdown of the eigenvector method at $k=69$ and a loss of accuracy, for the metrics $\textrm{err}_\textrm{r}$ and $\textrm{err}_\textrm{w}$, at $k=57$.
	\begin{figure}[!ht]
		\centering
		\includegraphics{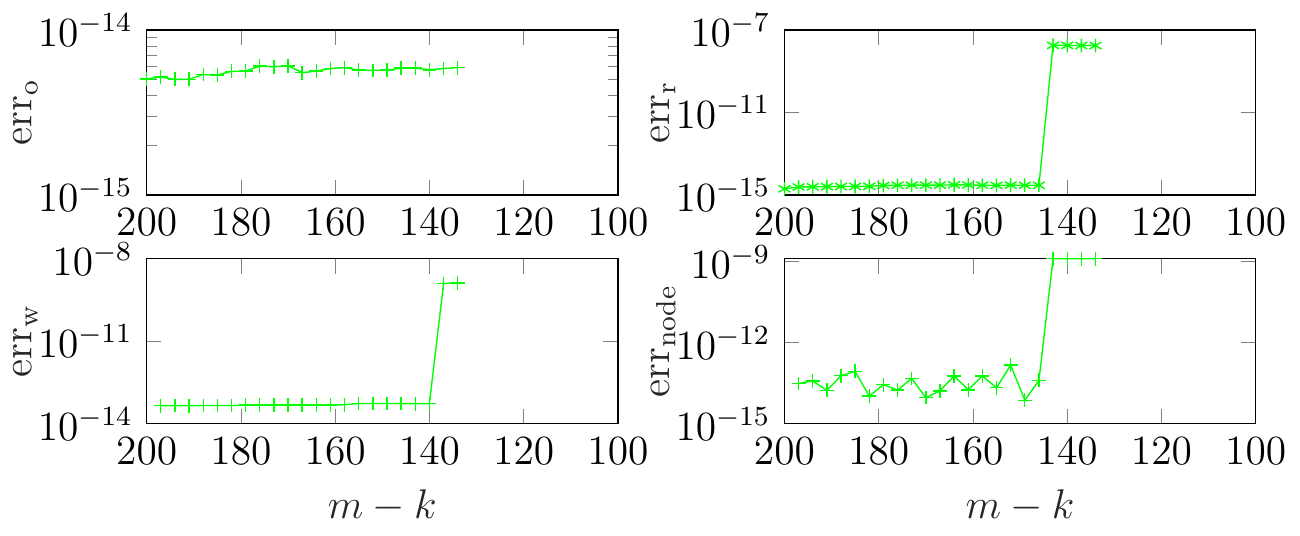}
		\caption{Chebyshev experiment with $m=200$ and a unbalanced order of downdating. Metrics for the eigenvector method with $n_{\textrm{IR}}=1$: $\color{green}+$. A breakdown occurs at $k=69$.}
		\label{fig:cheb_subopt_m200}
	\end{figure}
	
	In Figure \ref{fig:cheb_subopt_m200_emeth} at index $k=57$ we see that the given eigenvector is only accurate up to $10^{-9}$ and leads to an inaccurate $\dot{x}$, which does not satisfy \eqref{eq:cond_47}.
	This causes the eigenvector method to break down a few steps later.
	\begin{figure}[!ht]
		\centering
		\includegraphics{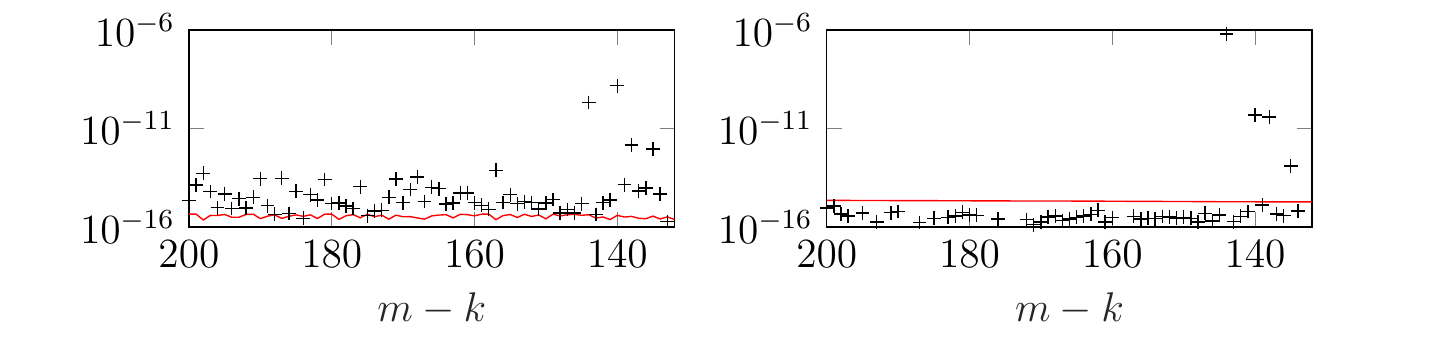}
		\caption{Chebyshev experiment with $m=200$, $n_{\textrm{IR}}=1$ and an unbalanced order of downdating. The conditions \eqref{eq:evec_quality} and  \eqref{eq:cond_47}, with the left hand side ${\color{black} + }$ and right hand side ${\color{red}-}$. }
		\label{fig:cheb_subopt_m200_emeth}
	\end{figure}
	To improve the accuracy of this given eigenvector $x$ we allow $n_\textrm{IR} =2$ steps of iterative refinement, the corresponding quantities of the eigenvector method are shown in Figure \ref{fig:cheb_subopt_m200_emeth_nIR2}.
	The initial eigenvectors are now computed more accurately, with accuracy of $10^{-11}$ or better which appears to be sufficient for the eigenvector method to successfully downdate half of the nodes.
	This means that allowing more iterative refinement steps improves the robustness of the eigenvector method.
	\begin{figure}[!ht]
		\centering
		\includegraphics{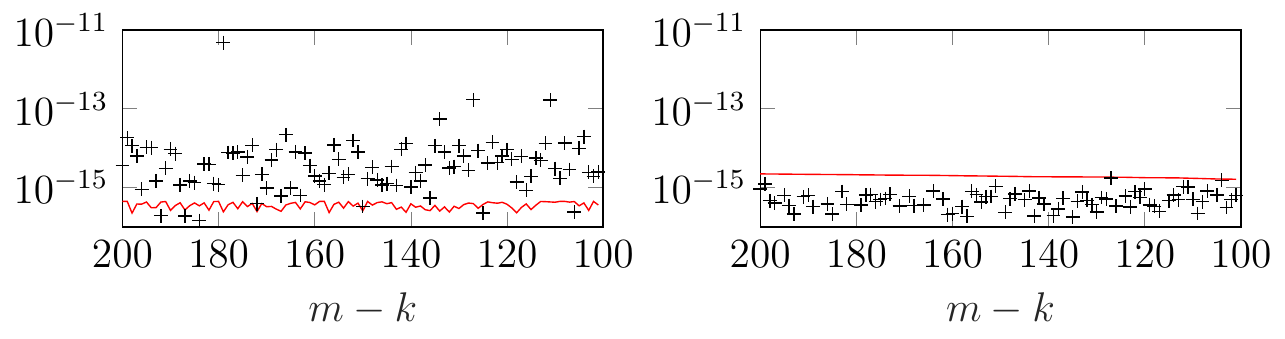}
		\caption{Chebyshev experiment with $m=200$, $n_{\textrm{IR}}=2$ and unbalanced order of downdating. The conditions \eqref{eq:evec_quality} and  \eqref{eq:cond_47}, with the left hand side ${\color{black} + }$ and right hand side ${\color{red}-}$.}
		\label{fig:cheb_subopt_m200_emeth_nIR2}
	\end{figure}
	The metrics shown in Figure~\ref{fig:cheb_subopt_m200_nIR2} reveal that the eigenvector method is again the method of choice.
	\begin{figure}[!ht]
		\centering
		\includegraphics{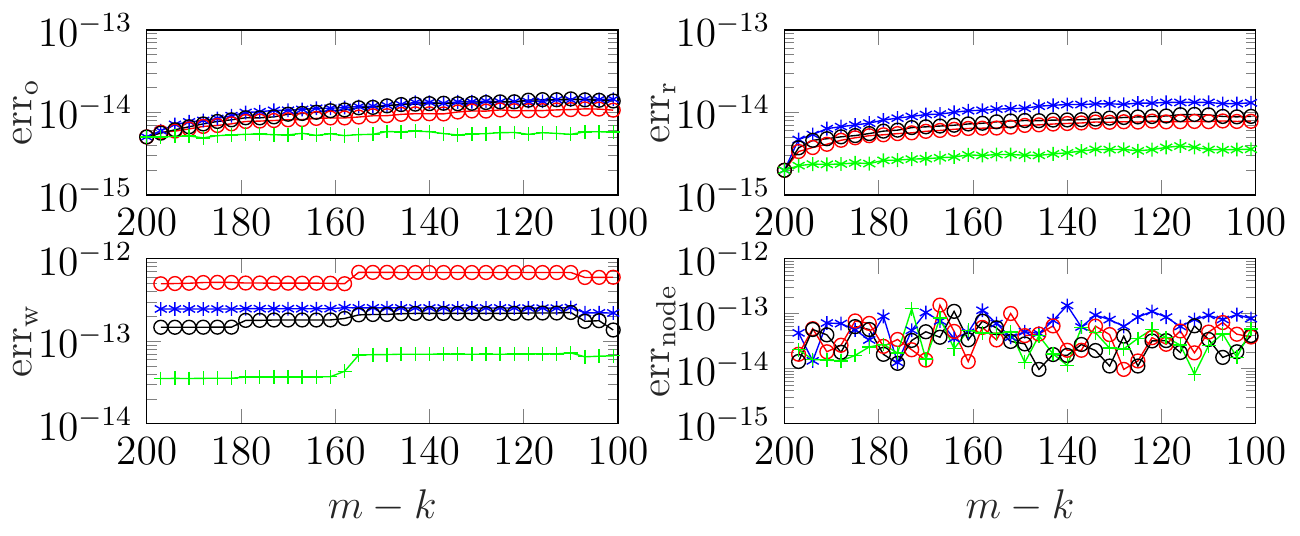}
		\caption{Chebyshev experiment with $m=200$ and an unbalanced order of downdating. Metrics comparing the explicit matrix method: ${\color{blue}\ast}$, implicit matrix method: 1 step $\color{red} \circ$, 2 steps $\color{black}\circ$, and eigenvector method with $n_{\textrm{IR}}=2$: $\color{green}+$.}
		\label{fig:cheb_subopt_m200_nIR2}
	\end{figure}

	\subsection{Equidistant points}
	In many real world applications data is measured in equidistant nodes.
	For a least squares problem formulated using this data, the associated inner product has equidistant nodes.
	We take $m$ equidistant nodes $\{z_j\}_{j=1}^m$, with $z_j = j/m$, and downdate $m/2$ nodes.
	The experiment setup is $m=250$ and $w=\frac{1}{\sqrt{m}}\begin{bmatrix}
		1 & \dots &1
	\end{bmatrix}^\top$ with the following choice of nodes for downdating, all nodes with even index are downdated by alternating between the smallest and the largest remaining index, i.e., in the order: $2,m,4,m-2,6,m-4,\dots$ (these are the indices for the initial set of $m$ nodes).
	To obtain a sufficiently accurate eigenvector for the eigenvector method it is necessary to allow more steps of iterative refinement $n_{\textrm{IR}}=10$ and to execute $b=5$ iterative refinement steps before checking condition \eqref{eq:evec_quality} again.
	For $b=1$ some of the eigenvectors $x$ are not computed accurately enough, leading to a breakdown, exploring this phenomenon and how to choose $b$ appropriately is subject of future research, but it shows that the eigenvector method is sensitive to the given eigenvector.
	Figure \ref{fig:equi_m250_LR} shows the metrics of the four downdating methods, the eigenvector method provides the best results for the metrics used in this paper.
	However, it is necessary to compute the given eigenvector accurately enough.
	\begin{figure}[!ht]
		\centering
		\includegraphics{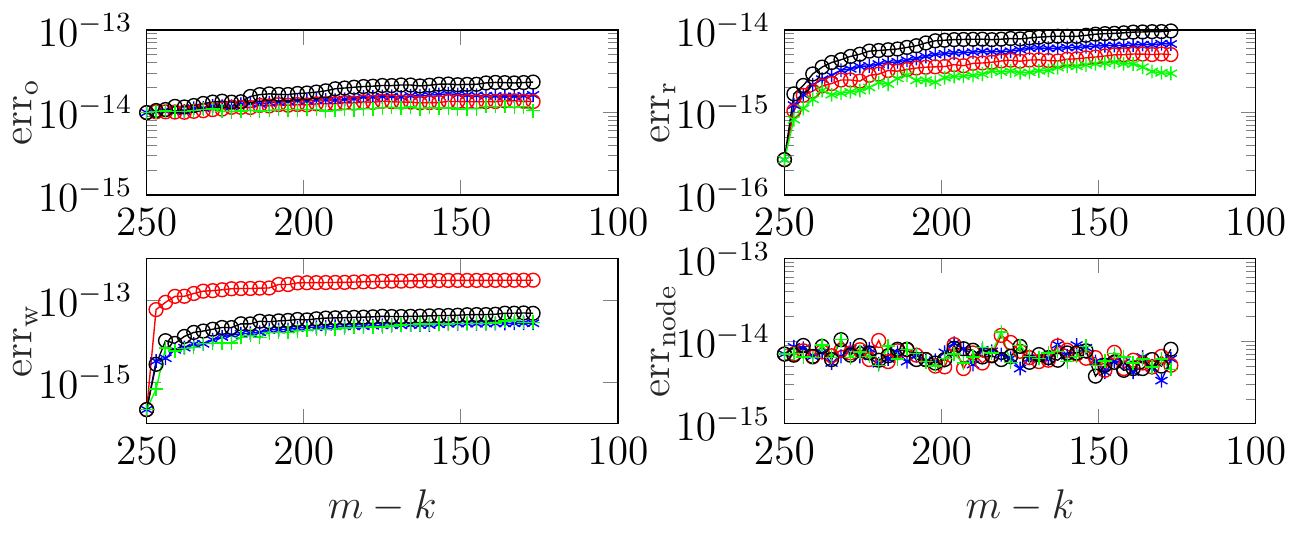}
		\caption{Equidistant nodes experiment with $m=250$. Metrics comparing the explicit matrix method: ${\color{blue}\ast}$, implicit matrix method: 1 step $\color{red} \circ$, 2 steps $\color{black}\circ$, and eigenvector method with, $n_{\textrm{IR}}=10$ and $b=5$: $\color{green}+$.}
		\label{fig:equi_m250_LR}
	\end{figure}
	\begin{figure}[!ht]
		\centering
		\includegraphics{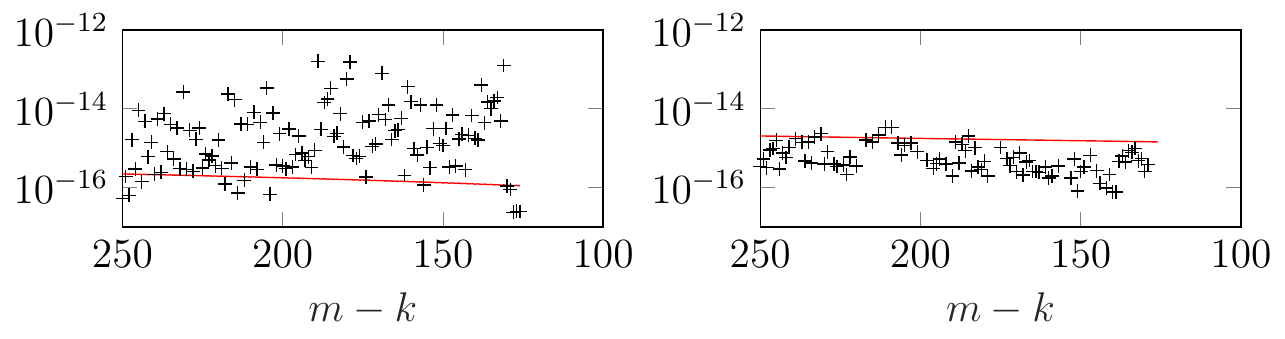}
		\caption{Equidistant nodes experiment with $m=250$, $n_{\textrm{IR}}=10$ and $b=5$. The conditions \eqref{eq:evec_quality} and  \eqref{eq:cond_47}, with the left hand side ${\color{black} + }$ and right hand side ${\color{red}-}$.}
		\label{fig:equi_m250_emeth_LR}
	\end{figure}	
	
	\newpage
	\section{QR and Krylov}\label{sec:QRKrylov} 
	In the polynomial setting we could rely on the explicit RQ (QR) algorithm
	and the structure of the factorizations involved to locate the desired
	eigenvectors and formulate theorems on structure preservation. For the
	rational RQ (QR) setting, things are more involved and the explicit
	version is not readily available. In order to derive the essential
	theorems, we will rely on Krylov subspaces. Typically the link between
	Krylov subspaces is explained for the QR, so in this section, to follow the
	classical conventions we will switch to the QR algorithm instead of
	the RQ one. The RQ is, however, closely linked to QR via an upside
	down and left right flip of the matrix on which we are operating,
	i.e., suppose we want to execute an RQ step on $H$. Consider the upper
	Hessenberg matrix $F=JH^T J$, with $J$ the counter identity. By using
	the QR factorization of $F=QR$  we can get an RQ factorization of
	$H=(JR^TJ)\, (JQ^T J)$. So the theory we will deduce here for the QR
	links directly to the RQ.
	
	We discuss the polynomial setting first. More details can be found in
	the publications of \cite{q961,p512,Wa11}. Suppose an explicit QR step
	is executed on the Hessenberg matrix $H$, namely $\hat{H}=\hat{Q}^H H
	\hat{Q}$. This gives us the associated equation $H \hat{Q} = \hat{Q} \hat{H}$,
	which links, in the case of an unreduced Hessenberg $H$ one to one to
	a Krylov subspace $\mathcal{K}(H,q_1)$. More precisely the columns of
	$Q$ span the Krylov subspace and are identical to the one executing
	the similarity transformation linked to the explicit QR algorithm. We
	have (see \cite{q961})
	\begin{equation*}
		\mbox{span}\{q_1,\ldots,q_j\} = \mathcal{K}_j(H,q_1), \quad
		\mbox{for}\; j=1,\ldots,n.
	\end{equation*}
	As $q_1=(H-\tilde{z}I) e_1$, we get, for $j=1,\ldots,n$,
	\begin{eqnarray*}
		\mbox{span}\{q_1,\ldots,q_j\}
		& = & \mathcal{K}_j(H,(H-\tilde{z}I) e_1) \\
		& = &  (H-\tilde{z}I)\mathcal{K}_j(H, e_1)
		= (H-\tilde{z}I)\mbox{span}\,\{e_1,\ldots,e_j\}.
	\end{eqnarray*}
	By the unreduced Hessenberg structure of $H$, we can deduce that in
	case $\tilde{z}$ equals an eigenvalue of $H$, the eigenvector must be
	$q_n$. Considering the link between QR and RQ, we see that in the RQ
	algorithm the eigenvector must be positioned in the top row.
	
	A similar strategy to position a left and right eigenvector in the
	case of the RQZ algorithm can be followed. Details, theorems and
	proofs can be found in \cite{CaMeVa19}. The analogue of the explicit QR method for the RQZ
	method is based on rational Krylov subspaces. We have to consider two
	rational Krylov subspaces and the orthogonal matrices $Q$ and $Z$
	whose columns span these Krylov subspaces determine the equivalence
	transformation. Identical reasoning as for the polynomial case can be
	carried out: in case we use a shift equal to an eigenvalue left and
	right eigenvectors can be found in the last columns of $Q$ and $Z$ respectively.

	\section{Manipulating the generalized eigenvalue decomposition}\label{sec:GEVP}
	Suppose we have solved Problem~\ref{problem:HPIEP}, providing us with the
	decomposition $ZQK_m=QH_m$. In this section we will show how to modify
	this decomposition in order to solve Problem~\ref{problem:downdateHP}.
	To do so, we compute two unitary  matrices $R$ and $S$, that transform
	the pencil $(H_m,K_m)$ to a new pencil $(\hat{H},\hat{K})=(RH_mS^H,RK_mS^H)$. This leads to the equation
	\begin{equation*}
		Z (QR^H) (RK_mS^H) = (QR^H) (R H_m S^H), 
	\end{equation*}
	or similarly, after some diagonal permutations in $Z$ and row
	permutations\footnote{We remark that the permutations are only added
		to visualize the block structure of the matrices involved in $QR^H$} in $QR^H$:
	\begin{equation*}
		\begin{bmatrix}
			\tilde{z} \\
			&\tilde{Z}
		\end{bmatrix}
		\begin{bmatrix}
			1 \\
			&\tilde{Q}
		\end{bmatrix}
		\begin{bmatrix}
			\tilde{k} \\
			&\tilde{K}_{m-1}
		\end{bmatrix}
		=
		\begin{bmatrix}
			1 \\
			&\tilde{Q}
		\end{bmatrix} 
		\begin{bmatrix}
			\tilde{h} \\
			&\tilde{H}_{m-1}
		\end{bmatrix},
	\end{equation*}
	where $\tilde{h}/\tilde{k}=\tilde{z}$.
	Extracting the lower right $(m-1)\times (m-1)$ block out of the
	equation should provide us the desired solution to the downdating problem.
	Of course also the constraint on the weights still needs to be
	satisfied.
	
	We discuss two approaches to construct the transformations $R$ and
	$S$. An implicit method based on the structure of the Hessenberg
	pencil and a method based on the left and right eigenvector.
	We do not discuss an explicit method.

	\subsection{Hessenberg pencil and the rational RQ method}

	A proper Hessenberg matrix generalizes to a proper Hessenberg pencil
	in the sense that also here no breakdown will occur.  Mathematically
	we end up with the following definition.
	
	\begin{definition}[Proper Hessenberg pencil]\label{def:properHP}
		A Hessenberg pencil $(H,K)\in\mathbb{C}^{m\times m}\times
		\mathbb{C}^{m\times m}$ is called \emph{proper} if none of the
		subdiagonal elements are simultaneously zero, i.e., $\vert
		h_{\ell+1,\ell}\vert + \vert k_{\ell+1,\ell} \vert \neq 0$ for
		$\ell=1,2,\dots,m-1$. Also, the first columns of $H$ and $K$ must
		be linearly independent, as
		well as the last rows of $H$ and $K$.
	\end{definition}
	
	\begin{definition}[Normal Hessenberg pencil]
		A Hessenberg pencil $(H,K)$ is said to be normal, if $H$ and
		$K$ are simultaneously unitarily diagonalizable, i.e., there exists
		unitary $R$ and $S$ such that $R^H H S$ and $R^H K S$ are both
		diagonal.
	\end{definition}

	As in the polynomial case, there are several variants to implement  a
	perfectly shifted rational QZ step. The essential theorem to prove the equivalences links
	left and right eigenvectors to columns in the equivalence
	transformation; columns that are in fact eigenvectors.
	
	In the polynomial case, we could immediately, see Lemma~\ref{lemma:RQ_perfectShift}, extract
	the position of the eigenvector in the unitary matrix $Q$ that
	executes the similarity transformation. For an explicit rational RQ
	step, we needed to focus on rational Krylov spaces to construct the
	unitary matrices $R$ and $S$ that determine the equivalence
	transformation as discussed in the previous section. The eigenvector
	is required in both implementations of the rational QZ method, either
	via an implicit chasing or directly via the eigenvector.

	As a consequence we can formulate the following theorem, that proves
	deflation for a normal proper Hessenberg pencil.
	
	\begin{theorem}[Deflation for perfect shift backward RQZ]\label{theorem:perfectShiftRQZ}
		Let $(H,K)$ be an $m\times m$ proper normal Hessenberg pencil.
		Let $\tilde{z}\in\mathbb{C}$ be an eigenvalue of $(H,K)$
		with $\tilde{z}\notin \Xi$. Consider a right eigenvector $\hat{s}$,
		and a left eigenvector $\hat{r}$. Executing an equivalence
		transformation, with $\hat{R}$ and $\hat{S}$, where $\hat{R}e_1=\hat{r}$ and $\hat{S}e_1=\hat{s}$,
		results in a pencil $\hat{R}^H(H,K)\hat{S}$, isolating the eigenvalue
		$\tilde{z}$ in the upper left corner.
	\end{theorem}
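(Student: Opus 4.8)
The plan is to lift the polynomial argument behind Theorem~\ref{theorem:downdateHess} to the pencil setting: there one used the similarity $\hat Q(\cdot)\hat Q^H$ together with the single unitary matrix $Q$ of eigenvectors, whereas here one uses the equivalence $\hat R^H(\cdot)\hat S$ together with the \emph{two} unitary matrices that diagonalise a normal pencil. Throughout, $\hat R$ and $\hat S$ are unitary, so in particular $\hat r=\hat Re_1$ and $\hat s=\hat Se_1$ are unit vectors.

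First I would unfold normality. Since $(H,K)$ is a normal Hessenberg pencil there are unitary $U,V$ and diagonal $D_H=\textrm{diag}(d_{H,1},\dots,d_{H,m})$, $D_K=\textrm{diag}(d_{K,1},\dots,d_{K,m})$ with $H=UD_HV^H$ and $K=UD_KV^H$. Read columnwise, $Hv_i=d_{H,i}u_i$ and $Kv_i=d_{K,i}u_i$, so the columns $v_i$ of $V$ are right eigenvectors and the columns $u_i$ of $U$ are left eigenvectors of $(H,K)$, each for the eigenvalue $\lambda_i=d_{H,i}/d_{K,i}$ (well defined and finite at the relevant index because $\tilde z\in\mathbb{C}$ and the pencil is regular). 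Because $\tilde z\notin\Xi$, the subdiagonal entries $h_{\ell+1,\ell}-\tilde z k_{\ell+1,\ell}$ of $H-\tilde zK$ are all nonzero, so $H-\tilde zK$ has rank at least $m-1$; combined with normality this forces $\tilde z$ to be a simple eigenvalue, and hence (taking $\tilde z=\lambda_j$ without loss of generality) the unit eigenvectors $\hat s$ and $\hat r$ equal $\beta v_j$ and $\alpha u_j$ with $|\beta|=|\alpha|=1$.

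Next I would compute the transformed pencil. Set $A:=U^H\hat R$ and $B:=V^H\hat S$, both unitary, so $\hat R^HH\hat S=A^HD_HB$ and $\hat R^HK\hat S=A^HD_KB$. The decisive observation is $Be_1=V^H\hat s=\beta V^Hv_j=\beta e_j$: the first column of $B$ equals $\beta e_j$, so, by unitarity of $B$ and $|\beta|=1$, its $j$th row is $\beta e_1^\top$. Likewise $Ae_1=U^H\hat r=\alpha e_j$, so the first column of $A$ is $\alpha e_j$, which gives $e_j^\top A=\alpha e_1^\top$, i.e.\ $e_1^\top A^H=\bar\alpha e_j^\top$ and $A^He_j=\bar\alpha e_1$. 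Substituting,
\begin{equation*}
	e_1^\top\bigl(\hat R^HH\hat S\bigr)=\bar\alpha\,e_j^\top D_HB=\bar\alpha\beta d_{H,j}e_1^\top,\qquad \bigl(\hat R^HH\hat S\bigr)e_1=\beta d_{H,j}A^He_j=\bar\alpha\beta d_{H,j}e_1,
\end{equation*}
and identically with $D_K$ in place of $D_H$. Thus the first row and first column of each transformed matrix vanish off the $(1,1)$ entry, so
\begin{equation*}
	\hat R^H(H,K)\hat S=\left(\begin{bmatrix}\bar\alpha\beta d_{H,j}&\boldsymbol 0\\\boldsymbol 0&\tilde H_{m-1}\end{bmatrix},\ \begin{bmatrix}\bar\alpha\beta d_{K,j}&\boldsymbol 0\\\boldsymbol 0&\tilde K_{m-1}\end{bmatrix}\right),
\end{equation*}
whose leading $1\times1$ pencil has the single eigenvalue $d_{H,j}/d_{K,j}=\tilde z$, while block diagonality hands the trailing $(m-1)\times(m-1)$ pencil the spectrum $\sigma(H,K)\setminus\{\tilde z\}$. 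This is the claimed isolation in the upper left corner.

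I do not expect the isolation itself to be the hard part — it is the two-sided twin of the orthogonality computation in Theorem~\ref{theorem:downdateHess}, the only genuine subtlety being to keep $U$ on the left and $V$ on the right consistent and to invoke $\tilde z\notin\Xi$ exactly where simplicity of $\tilde z$ is needed. The real obstacle lies in what this theorem is meant to feed: for the downdating application one must additionally guarantee that $(\tilde H_{m-1},\tilde K_{m-1})$ is again a \emph{proper Hessenberg} pencil carrying the poles $\tilde\Xi$, which an arbitrary unitary pair with the prescribed first columns does \emph{not} provide. One therefore takes $\hat R$ and $\hat S$ to be the products of core transformations generated by the implicit backward rational QZ step, and borrows the structure preservation (and the regularity preservation that again uses $\tilde z\notin\Xi$) from the rational Krylov / implicit-$Q$ reasoning of Section~\ref{sec:QRKrylov} and \cite{CaMeVa19}.
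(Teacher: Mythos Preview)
Your proof is correct and follows essentially the same route as the paper: diagonalise the normal pencil via two unitary matrices, use orthogonality of the eigenvector columns to see that $\hat r$ and $\hat s$ map to $e_j$ under the diagonalising transformations, and read off the block structure of $\hat R^H(H,K)\hat S$. Your version is in fact more careful than the paper's in two respects: you explicitly justify why $\tilde z$ is simple (via the rank bound on $H-\tilde zK$ coming from $\tilde z\notin\Xi$), and you spell out both the row and the column computation where the paper only writes one side and gestures at the other; your closing remark about Hessenberg structure preservation also matches the paper's post-proof discussion.
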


	\begin{proof}
		
		Let $R,S$ be the unitary factors of diagonalizing the normal
		pencil $(H,K)$. That is $H= R Z_H S^H, K=R Z_K S^H$, and both
		$Z_H$ and $Z_K$ are diagonal. Assume $\hat{r}$ and $\hat{s}$
		link to eigenvalue $z_j$ in $(H,K)$, and denote the associated
		eigenvectors as ${r}_j$ and ${s}_j$. Let $\hat{R}$ and
		$\hat{S}$ be of the form:
		$\hat{R}:=\begin{bmatrix}
				\hat{r}^H\\
				\hat{R}_{m-1} 
			\end{bmatrix}$
			and 
			$\hat{S}:=\begin{bmatrix}
				\hat{s}^H\\
				\hat{S}_{m-1} 
			\end{bmatrix}$.
		
		By relying on the orthogonality of the eigenvectors we obtain
		\begin{align*}
			\hat{r}^H R &=
			\hat{r}^H
			\begin{bmatrix}
				\vert & & \vert & \vert & \vert & & \vert \\
				r_1 & \dots & r_{j-1} & r_j & r_{j+1} & \dots & r_{m}\\
				\vert & & \vert & \vert & \vert & & \vert
			\end{bmatrix} \\
			&= \begin{bmatrix}
				0 & \dots & 0 & \alpha &0 & \dots & 0\\
			\end{bmatrix}\\
			&= \alpha e_j^\top,
		\end{align*}
		where $\alpha = \hat{r}^H r_j \neq 0$. A similar statement can
		be made for $\hat{s}$ applied to $S$.		
		As a result we get
		\begin{equation*}
			\hat{R} H \hat{S}^H = \hat{R} R Z_H S^H \hat{S}^H 
			= \begin{bmatrix}
				\lambda_H & \boldsymbol{0}\\
				\boldsymbol{0} & \widetilde{Z}_{m-1}
			\end{bmatrix}.
		\end{equation*}
		A similar deduction can be made for the matrix $K$.
		As a result we obtain a new unreduced Hessenberg pencil
		$(\widetilde{H}_{m-1},\widetilde{K}_{m-1})$, with spectrum
		$\sigma(H,K)\backslash \{\lambda\}$.
	\end{proof}
	
	By construction, in the actual algorithms we use unitary Hessenberg
	matrices for $\hat{R}$ and $\hat{S}$, the Hessenberg pencil ($\hat{R},\hat{S}$) must be unreduced.
	Let us formulate a theorem analoguous to
	Theorem~\ref{theorem:RQ_equivalent} for the pencil case.
	\begin{theorem}
		Let $(H,K)\in\mathbb{C}^{m\times m}$ be a proper Hessenberg pencil and $\lambda \in \sigma(H)$.
		Then the following statements hold:
		\begin{enumerate}
			\item $(H,K)$ has a normalized left and right eigenvector $\hat{r}$ and $\hat{s}$ corresponding to $\lambda$
			\begin{equation*}
				H \hat{s} = \lambda K \hat{s},\quad \Vert \hat{s} \Vert_2 =
				1, \quad
				\hat{r}^H H = \lambda   \hat{r}^H  K,\quad \Vert \hat{r} \Vert_2 = 1,
			\end{equation*}
			which are unique up to unimodular scaling and $e_m^\top
			\hat{s} \neq 0$, as well as $e_m^\top
			\hat{r} \neq 0$.
			\item Essentially unique sequences of
			$\{R_i\}_{i=1}^{m-1}$ and $\{S_i\}_{i=1}^{m-1}$ with  $R_i,S_i \in\mathfrak{C}_i$ forming the matrix
			\begin{equation*}
				\hat{R} := R_1 R_2 \dots R_{m-1}
				\mbox{ and }
				\hat{S} := S_1 S_2 \dots S_{m-1}
			\end{equation*}
			exist such that $(\hat{H},\hat{K})=\hat{R} (H,K)
			\hat{S}^H$ is a proper Hessenberg pencil and
			$\hat{R}e_1=\hat{r}$ as well as $\hat{S}e_1=\hat{s}$.
		\end{enumerate}
	\end{theorem}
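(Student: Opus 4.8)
The plan is to mirror the proof of Theorem~\ref{theorem:RQ_equivalent} (Theorem~2.1 of \cite{MaVD18}), but to carry out the equivalence with a perfectly shifted step by replacing the Krylov-subspace arguments of Section~\ref{sec:QRKrylov} by their rational counterparts from \cite{CaMeVa19}. The key preliminary observation is that, because the eigenvalues of $(H,K)$ are disjoint from its poles, $\lambda \notin \Xi$, and hence the shifted matrix $H-\lambda K$ is again a \emph{proper} (unreduced) Hessenberg matrix: its $(\ell+1,\ell)$ entry equals $h_{\ell+1,\ell}-\lambda k_{\ell+1,\ell}$, which is $h_{\ell+1,\ell}\neq 0$ when $k_{\ell+1,\ell}=0$ (pole at infinity) and $k_{\ell+1,\ell}(\xi_\ell-\lambda)\neq 0$ otherwise; the properness assumption on the pencil rules out both entries vanishing simultaneously.

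For the first statement, since $\lambda$ is an eigenvalue of the (regular) pencil, $H-\lambda K$ is singular, and being unreduced Hessenberg it then has rank exactly $m-1$ (its trailing $m-1$ rows, restricted to the leading $m-1$ columns, form a triangular block with the nonzero subdiagonal on its diagonal), so both its right and its left null space are one-dimensional. Normalising the right null vector gives $\hat s$ with $H\hat s=\lambda K\hat s$, unique up to unimodular scaling; the same for the left null vector gives $\hat r$ with $\hat r^H H=\lambda \hat r^H K$. The non-vanishing of the trailing entries, $e_m^\top \hat s\neq 0$ and the analogous condition on $\hat r$, follows from the unreduced structure of $H-\lambda K$ and of its conjugate transpose, exactly as in Lemma~\ref{lemma:RQ_perfectShift}; the extra parts of the properness definition (linear independence of the first columns, respectively the last rows, of $H$ and $K$) are what makes this argument available for the pencil rather than only for a single matrix.

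For the second statement, I would build the two core-transformation sequences by the usual elimination: pick $S_{m-1}\in\mathfrak{C}_{m-1}$ to annihilate the last entry of $\hat s$, then $S_{m-2}\in\mathfrak{C}_{m-2}$ to annihilate the last remaining nonzero entry, and so on, so that $S_1S_2\cdots S_{m-1}$ rotates $\hat s$ onto a multiple of $e_1$; construct $R_1,\dots,R_{m-1}$ from $\hat r$ in the same way. The non-vanishing of the trailing entries of $\hat r$ and $\hat s$ makes the first rotation of each sequence nontrivial, and — as long as the partially transformed pencil stays proper — so is every subsequent one, which gives the essential uniqueness (each $R_i$, $S_i$ is determined up to a unimodular scalar, the scalars telescoping through the products). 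The remaining and, I expect, genuinely hard point is to show that $(\hat H,\hat K)=\hat R(H,K)\hat S^H$ is again a \emph{proper} Hessenberg pencil with the prescribed first columns: unlike the polynomial case there is no explicit RQ factorisation from which to read this off directly, because two different unitary transformations act on the two sides. I would instead identify $(\hat R,\hat S)$ with the equivalence transformation produced by one perfectly shifted rational (backward) QZ step on $(H,K)$ and invoke the rational-Krylov description of such a step (Section~\ref{sec:QRKrylov} and \cite{CaMeVa19}): the columns of the two unitary factors span nested rational Krylov subspaces generated by $(H,K)$ and the poles, and this forces the transformed pencil to remain Hessenberg, the poles merely being reordered. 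Properness of $(\hat H,\hat K)$ is then inherited from the proper Hessenberg structure of the unitary factors $\hat R$ and $\hat S$, along the same lines as the argument for $\widetilde H_{m-1}$ in the proof of Theorem~\ref{theorem:downdateHess}, together with Theorem~\ref{theorem:perfectShiftRQZ}, which guarantees that the $\lambda$-block genuinely decouples.
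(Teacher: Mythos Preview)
The paper does not actually supply a proof of this theorem: it is stated immediately after Theorem~\ref{theorem:perfectShiftRQZ} with no argument, and the only indication of how one would proceed is the earlier remark (following Theorem~\ref{theorem:RQ_equivalent}) that ``for downdating orthogonal rational functions, a similar theorem is required, but the proof will build upon rational Krylov subspaces,'' together with the discussion in Section~\ref{sec:QRKrylov} pointing to \cite{CaMeVa19}. Your proposal follows exactly this route --- reduce statement~1 to the unreduced Hessenberg matrix $H-\lambda K$, and for statement~2 identify $(\hat R,\hat S)$ with the equivalence produced by a perfectly shifted backward RQZ step and appeal to the rational-Krylov/implicit-$Q$ description of that step --- so it is fully in line with what the paper sketches but does not write out.

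One point deserves tightening. Your claim that $e_m^\top\hat r\neq 0$ follows ``from the unreduced structure of $H-\lambda K$ and of its conjugate transpose'' is not the same argument as for $\hat s$. For an unreduced upper Hessenberg matrix the \emph{right} null vector has nonzero last entry, but the \emph{left} null vector has nonzero \emph{first} entry: already for $A=\left(\begin{smallmatrix}1&0\\1&2\end{smallmatrix}\right)$ with $\lambda=1$ the left null vector is $e_1^\top$. So the unreducedness of $(H-\lambda K)^H$ yields $e_1^\top\hat r\neq 0$, not $e_m^\top\hat r\neq 0$, and the extra properness conditions on first columns/last rows of $H$ and $K$ do not obviously rescue this. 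Either the stated condition on $\hat r$ is a typo in the paper (and your construction of the $R_i$ should then start at the other end), or a genuinely pencil-specific argument is needed here; the paper itself is silent on the matter, so you should flag which reading you adopt rather than assert that the same elimination works symmetrically.
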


	\section{Numerical algorithms for downdating rational functions}\label{sec:NumAlg:RF}
	For rational functions with prescribed poles the structured matrix in the IEP is a Hessenberg pencil $(H_m,K_m)\in\mathbb{C}^{m\times m}$.
	The corresponding IEP is given in Problem \ref{problem:HPIEP}.
	In this problem, the ratio restriction on subdiagonal elements of the
	Hessenberg pencil $\frac{h_{i+1,i}}{k_{i+1,i}} = \xi_i\in \overline{\mathbb{C}}$, $i=1,2,\dots, m-1$, provides the connection to rational functions with poles $\Xi = \{\xi_1,\xi_2,\dots, \xi_{m-1} \}$.
	For brevity, we will call the $i$th subdiagonal element of $(H,K)$
	both $h_{i+1,i}$ and $k_{i+1,i}$ simultaneously, we also name this the $i$th \emph{pole position} of the pencil.
	And we will say that $\xi$ appears on pole position $i$ if $\frac{h_{i+1,i}}{k_{i+1,i}} = \xi$.\\
	The procedures from Section \ref{sec:down:poly} can be generalized if, instead of a RQ
	step, we use the RQZ algorithm of \cite{CaMeVa19}.
	
	For the rational case, however, we will not have an explicit matrix
	method. Constructing an orthogonal basis for the Krylov subspace in
	the polynomial setting is easy because of the close connection between
	the Krylov subspace and the columns of the Hessenberg matrix. For the
	rational setting it is computationally and numerically not advisable
	to construct the rational Krylov subspace, whose basis is explicitly
	required to execute an explicit RQZ step. Hence we will focus solely
	on the implicit matrix method and on the eigenvector method.

	\subsection{The implicit matrix method}
	
	First we introduce the RQZ step where we have to chase in the other
	direction  \cite{CaMeVa19}, which
	consists of essentially two operations. The first operation is
	\emph{pole swapping} and takes place in the middle of the pencil
	$(H_m,K_m)$. The second operation is modifying the first or last pole by any given pole.
	Both procedures work essentially on $2\times 2$ matrices, hence we
	will introduce them for $2\times 2$ matrices and identify the relevant
	submatrices when describing the whole procedure.
	
	Swapping poles interchanges the poles on two neighboring pole positions, i.e., $\xi_i$ appearing on pole position $i$ and $\xi_{i+1}$ on position $i+1$ are interchanged such that $\xi_i$ now appears on pole position $i+1$ and $\xi_{i+1}$ on position $i$.
	Lemma \ref{lemma:poleSwapping} shows that swapping poles can be done
	using only unitary $2\times 2$ transformations.
	\begin{lemma}[Pole swapping \cite{CaMeVa19,BeGu15}]\label{lemma:poleSwapping}
		Let ${H} = \begin{bmatrix}
			h_{1,1} & h_{1,2}\\
			& h_{2,2}
		\end{bmatrix}$ and ${K} = \begin{bmatrix}
			k_{1,1} & k_{1,2}\\
			& k_{2,2}
		\end{bmatrix}$ and set $\xi_1 = \frac{h_{1,1}}{k_{1,1}}$ and $\xi_2 = \frac{h_{2,2}}{k_{2,2}}$.
		Then $R_1,S_1\in\mathfrak{C}_1$ can be constructed such that
		\begin{align*}
			R_1{H} S_1^H &= R_1 \begin{bmatrix}
				h_{1,1} & h_{1,2}\\
				& h_{2,2}
			\end{bmatrix} S_1^H = \begin{bmatrix}
				\tilde{h}_{1,1} & \tilde{h}_{1,2}\\
				& \tilde{h}_{2,2}
			\end{bmatrix},\\
			R_1{K} S_1^H &= R_1 \begin{bmatrix}
				k_{1,1} & k_{1,2}\\
				& k_{2,2}
			\end{bmatrix} S_1^H = \begin{bmatrix}
				\tilde{k}_{1,1} & \tilde{k}_{1,2}\\
				& \tilde{k}_{2,2}
			\end{bmatrix},
		\end{align*}
		where $\frac{\tilde{h}_{2,2}}{\tilde{k}_{2,2}}=\xi_1 $ and $\frac{\tilde{h}_{1,1}}{\tilde{k}_{1,1}}=\xi_2$, i.e., the poles are swapped on the diagonal of the matrices.
	\end{lemma}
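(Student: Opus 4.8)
The plan is to reduce the $2\times 2$ pole swap to finding two $2\times 2$ unitary transformations by a direct constructive argument. Write the pencil as $(H,K)$ with $H,K$ upper triangular. Since $\xi_1=h_{1,1}/k_{1,1}$ is the eigenvalue of the pencil associated with the first diagonal position and $\xi_2=h_{2,2}/k_{2,2}$ with the second, the idea is to first produce the right transformation $S_1$ and then the left transformation $R_1$ that cleans up the result. Concretely, I would take a right eigenvector $s$ of $(H,K)$ belonging to the eigenvalue $\xi_2$, i.e. a nonzero vector with $(k_{2,2}H-h_{2,2}K)s=0$; this amounts to solving a single scalar equation for the ratio of the two components of $s$ (using that $H$ and $K$ are upper triangular this is explicit, even in the case $\xi_2=\infty$, where one takes $s$ in the kernel of $K$). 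Let $S_1\in\mathfrak C_1$ be a unitary $2\times 2$ matrix whose \emph{first} column is $s/\|s\|_2$. Then $HS_1^H$ and $KS_1^H$ each have their first column proportional, respectively, to $Hs$ and $Ks$, and by the eigenvector relation these two first columns are parallel; hence a single unitary $R_1\in\mathfrak C_1$ acting on the left can be chosen to zero the $(2,1)$ entry of $R_1KS_1^H$ (equivalently of $R_1HS_1^H$), restoring upper triangular form of both matrices simultaneously.

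The second step is to check the claimed pole locations. After the transformation, the $(1,1)$ entries $\tilde h_{1,1},\tilde k_{1,1}$ are, up to a common scalar, the first components of $Hs$ and $Ks$; since $\xi_2$ is an eigenvalue we have $h_{2,2}Ks=k_{2,2}Hs$, and reading the first component gives $\tilde h_{1,1}/\tilde k_{1,1}=h_{2,2}/k_{2,2}=\xi_2$, as desired. For the other diagonal position one invokes invariance of the spectrum under equivalence transformations: the pencil $(R_1HS_1^H, R_1KS_1^H)$ has the same two eigenvalues $\{\xi_1,\xi_2\}$ as $(H,K)$ (an equivalence transformation by invertible matrices does not change generalized eigenvalues), and since both matrices are upper triangular the eigenvalues are exactly the diagonal ratios $\tilde h_{1,1}/\tilde k_{1,1}$ and $\tilde h_{2,2}/\tilde k_{2,2}$; having pinned the first to $\xi_2$, the second must be $\xi_1$. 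One should also note that $\xi_1\neq\xi_2$ is implicitly assumed (or else there is nothing to prove), which guarantees the two diagonal ratios after the swap are distinct and the argument is not degenerate.

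The main obstacle is the bookkeeping with the points at infinity and with possible zero entries: if $\xi_2=\infty$ then $k_{2,2}=0$ and the eigenvector $s$ must be taken in $\ker K$, and one has to verify that $Hs\neq 0$ so that $\tilde k_{1,1}=0$ while $\tilde h_{1,1}\neq 0$, giving the ratio $\infty$; the properness-type hypothesis on the $2\times 2$ block (the columns of $H$ and $K$ being independent, inherited from Definition~\ref{def:properHP}) is what rules out the bad cases. Symmetrically, when $\xi_1=\infty$ one checks $k_{1,1}=0$ is preserved in the trailing position. A clean way to organize all of this uniformly, which I would adopt, is to work projectively: identify $\xi$ with a pair $[\alpha:\beta]$ and the pole position with the one-dimensional subspace killed by $\beta H-\alpha K$ (on the right) and by $\beta H-\alpha K$ (on the left), so that ``swapping'' is literally choosing the unitary $S_1$ whose columns are an orthonormal basis aligned with the two right-null directions in the swapped order, and likewise $R_1$ on the left; the upper triangular structure of $H$ and $K$ makes these null directions explicit, and the computation in Lemma~\ref{lemma:poleSwapping} is then just writing out $R_1HS_1^H$ and $R_1KS_1^H$ entrywise. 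This also makes transparent why only $\mathfrak C_1$ transformations are needed: everything happens inside the single active $2\times2$ block.
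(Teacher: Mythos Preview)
The paper does not supply its own proof of this lemma; it is stated with citations to \cite{CaMeVa19,BeGu15} and used as a black box. Your eigenvector-based construction is precisely the standard argument used in those references for reordering a $2\times 2$ generalized Schur form, so in spirit you are reproducing the cited proof rather than diverging from anything in the present paper.

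There is one genuine slip in your write-up, however. You say: ``Let $S_1$ be a unitary $2\times 2$ matrix whose \emph{first column} is $s/\|s\|_2$. Then $HS_1^H$ and $KS_1^H$ each have their first column proportional to $Hs$ and $Ks$.'' This does not follow. If the first column of $S_1$ is $s/\|s\|$, then the first column of $S_1^H$ is the conjugate of the first \emph{row} of $S_1$, which is not $s/\|s\|$ in general; consequently the first column of $HS_1^H$ is not $Hs/\|s\|$. What you need is that $S_1^H e_1 = s/\|s\|$, i.e.\ the first column of $S_1^H$ (equivalently the first row of $S_1$, conjugated) equals $s/\|s\|$; the cleanest phrasing is to choose $S_1$ so that $S_1 s = \|s\|_2\, e_1$. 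With that correction the rest of your argument goes through verbatim: $HS_1^H e_1$ and $KS_1^H e_1$ are then proportional to $Hs$ and $Ks$, which are parallel by the eigenvector relation, so a single left rotation $R_1$ restores upper triangular form in both matrices simultaneously, and your spectrum-invariance argument pins down the diagonal ratios as $\xi_2,\xi_1$ in that order.

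Your remarks on handling $\xi_2=\infty$ via $\ker K$ and on working projectively with pairs $[\alpha:\beta]$ are correct and match how the cited references treat the degenerate cases.
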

	
	In order to apply Lemma \ref{lemma:poleSwapping} to a Hessenberg
	pencil, the plane rotations of size $2\times 2$ must be embedded in an
	identity matrix such that they act on the correct submatrix. As a
	result, particular poles on the subdiagonal will be swapped.
	
	Changing a pole is possible on the first pole position or the last, i.e., pole position $1$ or $m-1$.
	Lemma \ref{lemma:poleChanging} states that changing a pole can be
	performed by a unitary similarity transformation.
	We formulate the lemma only for the last pole, the first pole can be
	altered similarly.
	\begin{lemma}[Changing the last pole, \cite{CaMeVa19,BeGu15}] \label{lemma:poleChanging}
		Let $(H,K)\in\mathbb{C}^m$ be a proper Hessenberg pencil with
		poles $\Xi = \{\xi_1,\xi_2,\dots, \xi_{m-1} \}$, where
		$\xi_i\in \overline{\mathbb{C}}$ appears on pole position $i$.
		Let $\tilde{\xi}_{m-1}= \frac{\tilde{h}}{\tilde{k}}\notin \sigma(H,K)$ be a given pole and let $\xi_{m-1} = \frac{h}{k}$.
		
		Construct a unitary transformation $S\in\mathfrak{C}_1$ such that\footnote{
			Note that in case $k$ and $\tilde{k}$ are nonzero that
			$x^T = \tilde{\gamma} e_{m}^T ( H - \tilde{\xi}_1 K)^{-1}(H - \xi_1 K) $, for a
			scaling factor $\tilde{\gamma}$.}
		
		\begin{equation*}
			x^T S^H = \alpha e_{m}^T, \quad\mbox{ for }\quad x^T
			= \tilde{\gamma} e_{m}^T ( \tilde{k} H - \tilde{h} K)^{-1}(k H - h K) ,
		\end{equation*}
		for a constant $\gamma$. Then the pencil $(HS^H,KS^H)$ has $\tilde{\xi}_{m-1}$ on the last pole position and the poles on position 1 until $m-2$ equal those of $(H,K)$ on corresponding positions.
		
	\end{lemma}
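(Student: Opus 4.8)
The plan is to follow the template of the pole-swapping Lemma~\ref{lemma:poleSwapping}: read off the core transformation $S$ from a $2\times2$ argument on the last two rows of the pencil, and then check separately that $(HS^H,KS^H)$ is a proper Hessenberg pencil, that its pole positions $1,\dots,m-2$ are unchanged, and that its last pole position carries $\tilde\xi_{m-1}$. The starting point is to understand the vector $x^T$. Because $(H,K)$ is a proper Hessenberg pencil, the last rows of $H$ and $K$ are $[\,0,\dots,0,h_{m,m-1},h_{m,m}\,]$ and $[\,0,\dots,0,k_{m,m-1},k_{m,m}\,]$ and are linearly independent. Since $\xi_{m-1}=h/k$ is exactly the pole sitting on the last pole position, the $(m,m-1)$ entry $kh_{m,m-1}-hk_{m,m-1}$ of $kH-hK$ vanishes, so $e_m^T(kH-hK)=\beta\,e_m^T$ with $\beta=kh_{m,m}-hk_{m,m}$; equivalently $kH-hK$ is block upper triangular with trailing $1\times1$ block $\beta$. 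Feeding this through $(\tilde kH-\tilde hK)^{-1}$, which exists because $\tilde\xi_{m-1}\notin\sigma(H,K)$ makes $\tilde kH-\tilde hK$ nonsingular, and using the upper Hessenberg shape of the pencil, I would show that $x^T$ has nonzero entries only in positions $m-1$ and $m$, with $(x_{m-1}:x_m)$ equal to the ratio $(\tilde kh_{m,m-1}-\tilde hk_{m,m-1}):(\tilde kh_{m,m}-\tilde hk_{m,m})$, i.e.\ $x^T$ is a multiple of $e_m^T(\tilde kH-\tilde hK)$. Linear independence of the last rows of $H$ and $K$ guarantees this $2$-vector is nonzero, so $S\in\mathfrak C_{m-1}$ can be taken to be the essentially unique core transformation whose active $2\times2$ block rotates $[\,x_{m-1},x_m\,]$ onto a multiple of $e_2$, that is $x^TS^H=\alpha e_m^T$.

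With $S$ in hand, the three verifications are short. Right multiplication by $S^H\in\mathfrak C_{m-1}$ only recombines the last two columns of each matrix; the new column $m-1$ is allowed to be supported on all of rows $1,\dots,m$ and column $m$ is unconstrained, so no second-subdiagonal fill-in appears and $(HS^H,KS^H)$ is again upper Hessenberg — this is precisely why a single core transformation is admissible only at the first or last pole position. Properness is preserved because $S$ is unitary and the $2\times2$ block formed by the last rows stays full rank. The pole ratios on positions $1,\dots,m-2$ only involve columns $1,\dots,m-2$, which $S^H$ does not touch, hence they are unchanged. Finally, evaluating $(HS^H)_{m,m-1}/(KS^H)_{m,m-1}$ and using $x^TS^H=\alpha e_m^T$ together with $x^T\propto e_m^T(\tilde kH-\tilde hK)$ yields exactly $\tilde h/\tilde k=\tilde\xi_{m-1}$ on the last pole position. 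The footnote expression $x^T=\tilde\gamma\,e_m^T(\tilde kH-\tilde hK)^{-1}(kH-hK)$ — which for $k,\tilde k\neq0$ reads $x^T\propto e_m^T(H-\tilde\xi_{m-1}K)^{-1}(H-\xi_{m-1}K)$ — is this same direction after clearing denominators. The first-pole version is obtained by the analogous argument applied to a left core transformation in $\mathfrak C_1$, or by invoking the $(JH^TJ,JK^TJ)$ flip of Section~\ref{sec:QRKrylov}.

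The step I expect to be the main obstacle is the reduction of $x^T$ to its last two components and the identification of their ratio: this is where the Hessenberg structure, the fact that $\xi_{m-1}$ on the last pole position forces $e_m^T(kH-hK)$ to be a multiple of $e_m^T$, and the nonsingularity of $\tilde kH-\tilde hK$ must be combined; exploiting the block-upper-triangular form of $kH-hK$ is the cleanest route, but one must also be careful that the relevant leading blocks are invertible or argue around it. The remaining care goes into the degenerate configurations: $\tilde\xi_{m-1}=\infty$ (so $\tilde k=0$), $\xi_{m-1}=\infty$ (so $k=0$), and the trivial case $\tilde\xi_{m-1}=\xi_{m-1}$, for which one works with the homogeneous pencils $\tilde kH-\tilde hK$ and $kH-hK$ rather than the affine forms $H-\tilde\xi_{m-1}K$ and $H-\xi_{m-1}K$, exactly as the footnote signals.
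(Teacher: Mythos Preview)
The paper does not give its own proof of this lemma (it is quoted from the cited references), so there is no in-paper argument to compare against. Your overall plan is the right one: identify the $2$-vector carried by the last row, build $S\in\mathfrak C_{m-1}$ from it, and then verify that right-multiplication by $S^H$ preserves the Hessenberg shape, leaves poles $1,\dots,m-2$ untouched, and installs $\tilde\xi_{m-1}$ in position $m-1$. Those three verifications are correct once $S$ is correctly determined.

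The gap is in the reduction of $x^T$ to its last two components. You assert that
\[
x^T=\tilde\gamma\,e_m^T(\tilde kH-\tilde hK)^{-1}(kH-hK)
\]
has nonzero entries only in positions $m-1,m$ and is in fact proportional to $e_m^T(\tilde kH-\tilde hK)$. Neither holds. Writing $M=\tilde kH-\tilde hK$ and $N=kH-hK$, your observation $e_m^TN=\beta e_m^T$ would yield the claim for $e_m^T N M^{-1}$, but the lemma's vector is $e_m^T M^{-1}N$, and here the inverse sits on the wrong side: the last row of $M^{-1}$ is generically full (for a proper Hessenberg $M$ the lower triangular part of $M^{-1}$ is rank one but nowhere zero), and multiplying that full row on the right by the Hessenberg matrix $N$ does not restore the desired sparsity. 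A direct $3\times3$ computation already produces a nonzero first coordinate, so no core transformation in $\mathfrak C_{m-1}$ can send this $x^T$ to $\alpha e_m^T$.

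What your argument actually establishes is the version with $x^T\propto e_m^T(\tilde kH-\tilde hK)$: then $x^TS^H=\alpha e_m^T$ is precisely the statement that the $(m,m-1)$ entry of $(\tilde kH-\tilde hK)S^H$ vanishes, i.e.\ $(HS^H)_{m,m-1}/(KS^H)_{m,m-1}=\tilde h/\tilde k=\tilde\xi_{m-1}$. This simpler vector is what the cited sources use for changing the last pole; the more elaborate expression printed in the statement is the initial vector of a backward RQZ sweep and appears to be a transcription slip (note also the stray $\mathfrak C_1$ and the $\tilde\xi_1,\xi_1$ in the footnote). You should prove the lemma for the simple vector and flag the discrepancy, rather than claim the two directions coincide.
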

	Let $Z = \textrm{diag}(z_1,\dots,z_m)$, $w =\begin{bmatrix}
		w_1 & \dots & w_m
	\end{bmatrix}^\top$ and $\Xi = \{\xi_1,\dots,\xi_{m-1} \}$, and suppose we
	have a solution to the HPIEP $(H_m,K_m)$, such that $ZQ_m K_m =
	Q_mH_m$, with $Q_m$ unitary and the poles appear on the subdiagonal of
	the pencil.
	Now, we would like to downdate this solution: We want to remove a node $\tilde{z}$.
	
	\begin{enumerate}
		\item The implicit RQZ method, where we chase backwards is initialized
		by replacing the last pole, i.e., the pole in the $(m-1)$st position
		on the subdiagonal with $\tilde{z}=h/k$ as described in Lemma~\ref{lemma:poleChanging}
		
		\item Next we keep swapping the pole to move it slowly to the top of
		matrix. We use the unitary transformations from Lemma~\ref{lemma:poleSwapping}.  
	\end{enumerate}
	At the very end, in the exact theoretical setting the resulting pencil $(\widetilde{H},\widetilde{K})$ has structure
	\begin{equation*}
		\left(	\begin{bmatrix}
			{h} \\
			& \times & \times & \times & \dots & \times & \times\\
			& \times & \times & \times & \dots & \times & \times\\
			& 		 & \ddots & \vdots & 	   & \vdots & \vdots\\
			& 		 & 		  & \times & \dots & \times & \times\\
			& 		 & 		  & 	   &\ddots & \vdots & \vdots\\
			& 		 & 		  & 	   & 	   & \times & \times
		\end{bmatrix} , \begin{bmatrix}
			k \\
			& \times & \times & \times & \dots & \times & \times\\
			& \times & \times & \times & \dots & \times & \times\\
			& 		 & \ddots & \vdots & 	   & \vdots & \vdots\\
			& 		 & 		  & \times & \dots & \times & \times\\
			& 		 & 		  & 	   &\ddots & \vdots & \vdots\\
			& 		 & 		  & 	   & 	   & \times & \times
		\end{bmatrix}\right),
	\end{equation*}
	where $\frac{h}{k} = \tilde{z}$ equals the node to be downdated.
	
	A closer look at the algorithm reveals that the pole that will be
	deflated will always be the pole in the last position, i.e., the
	$(m-1)$st position on the subdiagonal as it will be replaced by
	$\tilde{z}$. If one, however, wishes to keep that pole and remove
	another pole, one can make use of Lemma~\ref{lemma:poleSwapping} to move the pole that one wants to get rid of to position $(m-1)$, after which one starts the implicit algorithm.
	Suppose that a single implicit step does not provide sufficient accuracy,
	meaning that one can not immediately deflate the desired node. In that
	case a second chasing step can be considered, similarly as in the
	polynomial setting. As a consequence the vector of weights will not be
	correct anymore and structure restoring transformations are required.
	
	\subsection{The eigenvector method}\label{sec:downdateRF_MaVD}
	The generalization of the eigenvector method to downdating of a
	Hessenberg pencil $(H_m,K_m)$ requires us to compute both the left and
	right eigenvector. 
	We follow Theorem~\ref{theorem:perfectShiftRQZ} and employ the same three steps as described for the Hessenberg matrix case in Section~\ref{sec:downdatePoly_MaVD}.
	Assume we have a number $\tilde{z}$ close enough to an eigenvalue of the pencil $(H_m, K_m)$.
	First we perform all three steps for the left eigenvector and afterwards for the right eigenvector.
	In the first step the left eigenvector $r$ is obtained by evaluating the sequence of orthogonal rational functions generated by $(H_m,K_m)$ in the given eigenvalue $\tilde{z}$.
	This eigenvector should satisfy
	\begin{equation}\label{eq:leftevec_quality}
		\Vert r^H \left(H_m-\tilde{z}K_m \right)\Vert_2 \approx \epsilon_{\textrm{mach}} \Vert H_m-\tilde{z} K_m\Vert_2,
	\end{equation}
	if this is not satisfied, iterative refinement can be used to improve the accuracy of $r$.
	The second step consists of computing a left eigenvector $\dot{r}$, starting from $r$, that is especially accurate in its trailing entries, i.e., 
	\begin{equation}\label{eq:cond_47_left}
		\left\Vert \begin{bmatrix}\hat{\epsilon}_1 & \hat{\epsilon}_2 & \dots &\hat{\epsilon}_m		\end{bmatrix}\right\Vert_2 \leq  \epsilon_{\textrm{mach}} \min\left(\Vert H_m\Vert_\textrm{F},\Vert K_{m}\Vert_\textrm{F} \right),
	\end{equation}
	with  $\hat{\epsilon}_i = \frac{e_i^\top (\dot{r}^H(H_m-\tilde{z}K_m))}{\left\Vert \begin{bmatrix}
			\dot{r}_{i-1} & \dot{r}_i & \dots & \dot{r}_{m}
		\end{bmatrix} \right\Vert_2}$ and $\dot{r}_{0} := 0$.	
	This vector is obtained in the same way as for the Hessenberg matrix case, making straightforward adjustments to take into account that now we deal with a pencil.
	In the third step a unitary similarity transformation is executed with the matrix $R$ that reduces $\dot{r}$ to $R^H \dot{r} = e_1$.
	The resulting pencil is $\left(R^H H_m, R^H K_m\right)$.\\
	The same three steps are repeated for the right eigenvector $s$, but now applied to the pencil $\left(R^H H_m, R^H K_m\right)$.
	The right eigenvector $s$ must be computed in a different way than the left eigenvector, because there is no connection to the sequence of orthogonal rational functions.
	Any method can be applied as long as the resulting eigenvector satisfies
	\begin{equation}\label{eq:rightevec_quality}
		\Vert \left(R^H H_m -\tilde{z} R^H K_m\right) s\Vert_2 \approx \epsilon_{\textrm{mach}} \Vert R^H H_m -\tilde{z} R^H K_m\Vert_2.
	\end{equation}
	After computing $\dot{s}$ satisfying, for  $\hat{\epsilon}_i = \frac{e_i^\top ((R^H H_{m}-\tilde{z}R^H K_{m}) \dot{s})}{\left\Vert \begin{bmatrix}
			\dot{s}_{i-1} & \dot{s}_i & \dots & \dot{s}_{m}
		\end{bmatrix} \right\Vert_2}$ and $\dot{s}_{0} := 0$,
	\begin{equation}\label{eq:cond_47_right}
		\left\Vert \begin{bmatrix}\hat{\epsilon}_1 & \hat{\epsilon}_2 & \dots &\hat{\epsilon}_m		\end{bmatrix}\right\Vert_2 \leq  \epsilon_{\textrm{mach}} \min\left(\Vert R^H H_m\Vert_\textrm{F},\Vert R^H K_m-\Vert_\textrm{F} \right),
	\end{equation}
	the matrix $S$ reducing $\dot{s}$ to $S^H \dot{s}=e_1$ is computed.
	The corresponding unitary similarity transformation on the pencil leads to $(R^H H_m S,R^H K_m S)$, which according to Theorem \ref{theorem:perfectShiftRQZ}, allows deflation of $\tilde{z}$.
	Also here a specific pole will be deflated. If deflation of another pole is
	desired, it needs to be brought to the bottom right corner first.
	Note that the choice to first treat the left eigenvector and afterwards the right eigenvector is only one possible way to implement this method.
	Determining the most stable variant is subject of future research and is out of the scope of this paper.

	\section{Numerical experiments}\label{sec:NumExp:RF}
	Two numerical experiments are performed to compare the implicit matrix and eigenvector method for solving Problem \ref{problem:downdateHP}.
	The first experiment uses the same setup as in Section \ref{sec:down:poly} and the second uses a sliding window.
	In both methods the unitary matrix $\tilde{Q}$ will be computed only to be used for the computation of these metrics, it is not used in the methods themselves. \\
	The metrics for the orthogonality error \eqref{eq:err_o} and weight error \eqref{eq:err_w} for the polynomial case can be immediately applied to the rational case.
	For the node error \eqref{eq:err_nodes} the eigenvalues $\lambda_j$ are now the eigenvalues of a pencil $(\tilde{H}_{m-k},\tilde{K}_{m-k})$.
	Additionally we use the following two metrics, where $\tilde{Z}$ denotes the appropriately redefined matrix of nodes,
	\begin{enumerate}
		\item The \textit{recurrence error} for the pencil $(\tilde{H}_{m-k},\tilde{K}_{m-k})$:
		\begin{equation}\label{eq:err_r_RF}
			\textrm{err}_{\textrm{r}} := \frac{\Vert \tilde{Z} \tilde{Q}_{m-k}\tilde{K}_{m-k} - \tilde{Q}_{m-k} \tilde{H}_{m-k}\Vert _2}{\max\left(\Vert \tilde{Z}\tilde{Q}_{m-k} \tilde{K}_{m-k} \Vert_2, \Vert \tilde{Q}_{m-k} \tilde{H}_{m-k} \Vert_2 \right)}.
		\end{equation}
		\item The \textit{pole error} quantifies the accuracy of the poles appearing in the pencil $(\tilde{H}_{m-k},\tilde{K}_{m-k})$ to the given poles $\{\tilde{\xi}_j\}_{j=1}^{m-k-1}$.
		Let $\tilde{h}_{i,j}$ and $\tilde{k}_{i,j}$ denote the $(i,j)$th element of $\tilde{H}_{m-k}$ and $\tilde{K}_{m-k}$, respectively, then the pole error is defined as
		\begin{equation}\label{eq:err_poles}
			\textrm{err}_\textrm{p} = \max_j \left(\begin{cases}
				\left\vert\frac{\frac{\tilde{h}_{j+1,j}}{\tilde{k}_{j+1,j}}-\tilde{\xi}_j}{\tilde{\xi}_j}\right\vert, \quad &\text{if } \tilde{\xi}_j \in \mathbb{C},\\
				\vert \frac{\tilde{k}_{j+1,j}}{\tilde{h}_{j+1,j}}\vert, \quad &\text{if } \tilde{\xi}_j,\text{ if infinite}
			\end{cases}\right).
		\end{equation}
	\end{enumerate}
	For the eigenvector method we will track the two conditions for the left eigenvector \eqref{eq:leftevec_quality} and $\eqref{eq:cond_47_left}$ and for the right eigenvector \eqref{eq:rightevec_quality} and \eqref{eq:cond_47_right}.
	In both experiments we start from a pencil $(H_m,K_m)$, the solution to Problem~ \ref{problem:HPIEP}, obtained by the updating procedure proposed by \cite{VBVBVa22}.
	
	\subsection{Unit circle}
	The $m$ nodes $\{z_j\}_{j=1}^m$ are chosen in a balanced way on the unit circle, as described in Section \ref{sec:exp:polyUC} and the weights are $w = \frac{1}{\sqrt{m}}\begin{bmatrix}
		1 & 1 & \dots & 1
	\end{bmatrix}^\top$.
	The poles are chosen on a smaller circle with radius $1-\delta$ and on a bigger circle with radius $1+\delta$, with $0<\delta <1$.
	They appear in pairs, one pole on the smaller circle under a certain angle and one on the bigger circle with the same angle.
	These pairs are added in the same balanced order as the nodes.
	\begin{figure}[!ht]
		\centering
		\includegraphics{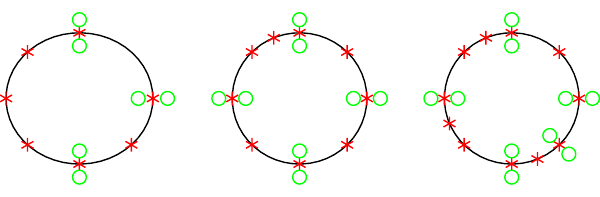}
		\caption{Balanced order of choosing $m$ equidistant nodes ${\color{red} \ast}$ on a circle with radius 1 and $m-1$ poles ${\color{green} \circ}$ appearing in pairs on circles with radius $(1-\delta)$ and $(1+\delta)$ for $m=7,9$ and $11$.}
		\label{fig:balancedCircle}
	\end{figure}
	The experiment downdates half of the nodes in the reverse order in which they are added.
	Figure \ref{fig:UC_opt_m201_RF} shows the metrics for $\delta = 0.1$, $m=201$ and $n_\textrm{IR}=1$.
	Both methods perform satisfactory, for the eigenvector method the metrics show a faster deterioration of the solutions $(\tilde{H}_{m-k},\tilde{K}_{m-k})$ than for the matrix method.
	\begin{figure}[!ht]
		\centering
		\includegraphics{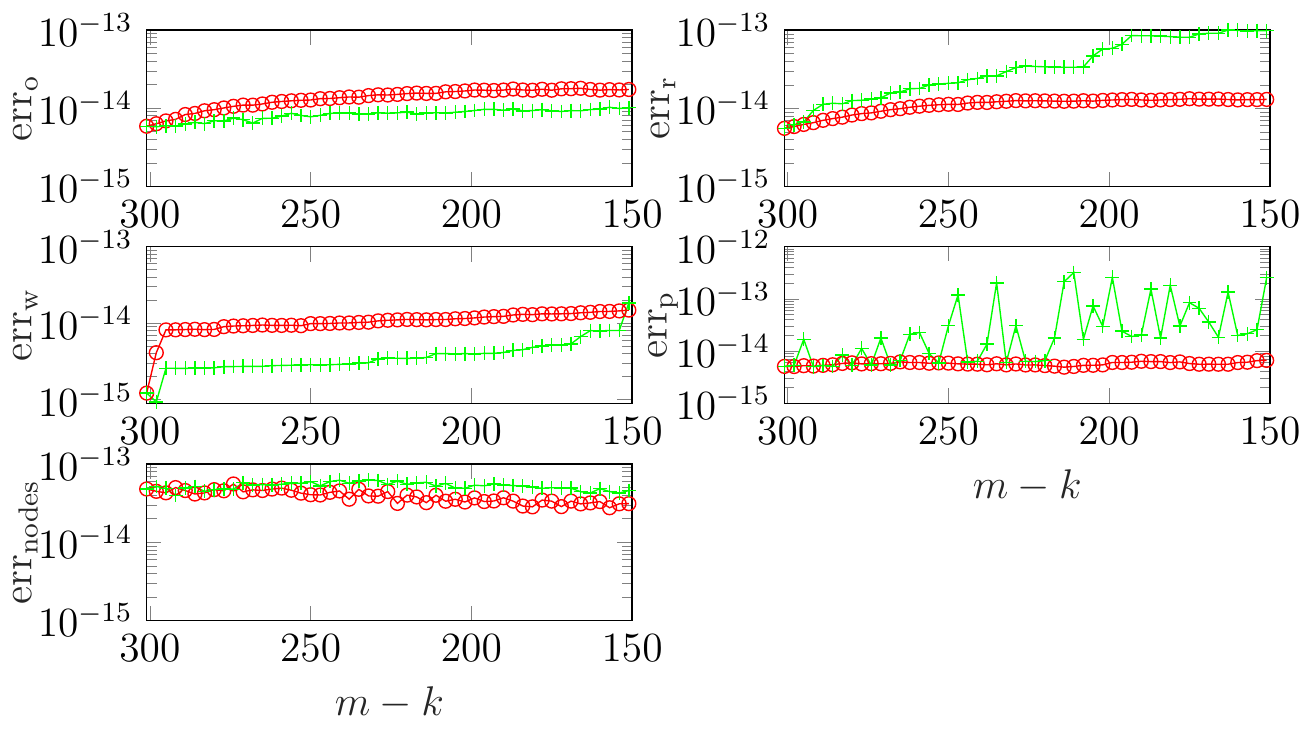}
		\caption{Metrics comparing the matrix method ${\color{red}\circ}$ and eigenvector method $\color{green}+$ for the unit circle experiment with $\delta = 0.1$, $m=201$ and $n_\textrm{IR}=1$.}
		\label{fig:UC_opt_m201_RF}
	\end{figure}

	\subsection{Real line}\label{sec:exp:RF_realLine}
	For the following experiment with nodes on the real line we change the setup of the experiment, we choose to employ a sliding window.
	We start from a solution $(H_{m},K_{m})$, $Q$ to Problem \ref{problem:HPIEP} for $\{z_j\}_{j=1}^{m}, \{\xi_j\}_{j=1}^{m-1}$.
	Then downdate the first two nodes and poles, i.e., by solving Problem \ref{problem:downdateHP} twice resulting in a intermediate solution to the IEP with $\{z_j\}_{j=3}^{m}, \{\xi_j\}_{j=3}^{m-1}$.
	Immediately after this, two new nodes and poles are updated, resulting in the solution $(\tilde{H}^{(1)}_{m},\tilde{K}^{(1)}_{m})$, $\tilde{Q}^{(1)}$ for $\{z_j\}_{j=3}^{m+2}, \{\xi_j\}_{j=3}^{m+1}$.
	This is shown in Figure \ref{fig:window_realLine} for equidistant nodes $\{x_j\}_{j=1}^m$ in the interval $\left[a,b\right]$ and poles
	\begin{equation}\label{eq:poles}
		\xi_j = \begin{cases}
			a + (j-1/2) \Delta x + \delta \imath,\quad \text{if } j \text{ odd},\\
			a + (j-3/2) \Delta x - \delta \imath,\quad \text{if } j \text{ even}.
		\end{cases}
	\end{equation}
	This process is repeated $\ell$ times, resulting at the $k$th time in the solution $(\tilde{H}^{(k)}_{m},\tilde{K}^{(k)}_{m})$, $\tilde{Q}^{(k)}$ of size $m\times m$.
	Instead of computing the metrics for $(\tilde{H}_{m-k},\tilde{K}_{m-k})$ which decrease in size as $k$ increases, we now compute the metrics for $(\tilde{H}^{(k)}_m,\tilde{K}^{(k)}_m)$ of size $m\times m$ for several values of $k$.
	\begin{figure}[!ht]
		\centering
		\includegraphics{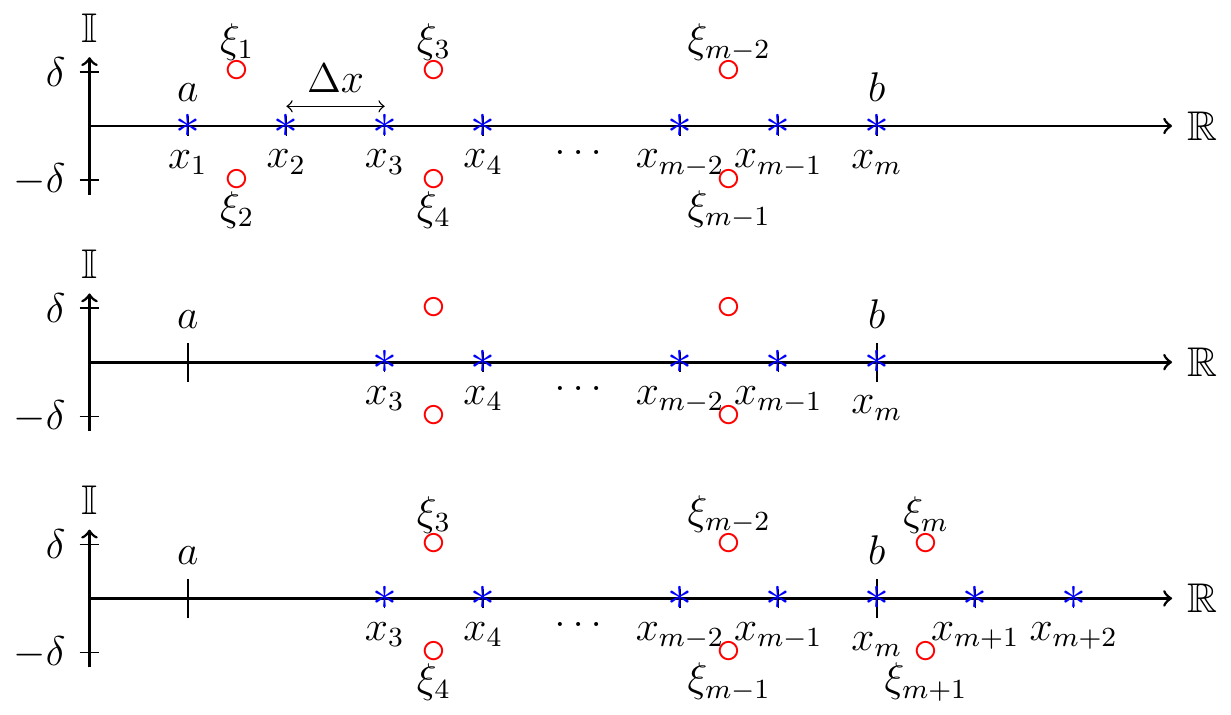}
		\caption{A single step in the experiment on the real line is illustrated in the complex plane. The top figure shows the initial setup, $m$ equidistant nodes $\{x_j\}_{j=1}^m$ ${\color{blue}\ast}$ in $\left[a,b\right]$ and $m-1$ poles $\{\xi_j\}_{j=1}^{m-1}$ \eqref{eq:poles}.
			The middle figure shows the setup for downdating: remove the first two nodes and first two poles. 
			The bottom figure shows setup for updating: add two equidistant nodes $x_{m+1} = b+\Delta x$ and $x_{m+2} = b+2\Delta x$ and poles $\xi_{m}$ and $\xi_{m+1}$ according to formula \eqref{eq:poles}.}
		\label{fig:window_realLine}
	\end{figure}
	
	In Figure \ref{fig:exp:window_realLine} the metrics for $m=201$, $\left[a,b\right] = \left[0,2\pi\right]$, $\delta = 0.1$ and $\ell=100$ are shown.
	The implicit matrix method performs very well, with only a small error growth.
	The eigenvector method, with $n_\textrm{IR}=1$, shows a faster error growth.
	To explain this we take a look at the quantities shown in Figure \ref{fig:exp:window_realLine_emethod}.
	As $k$ increases (we slide further) these quantities indicate that the accuracy of the eigenvectors $\hat{r}$ and $\hat{s}$ deteriorates steadily, which leads to an inaccurate $\dot{r}$ and $\dot{s}$ and therefore a deterioration in the quality of the computed pencil.
	For larger values of $n_\textrm{IR}$ the results do not improve, hinting at another cause for the fast decrease in accuracy, e.g., the conditioning.
	Exploring this further is out of the scope of this paper.
	\begin{figure}[!ht]
		\centering
		\includegraphics{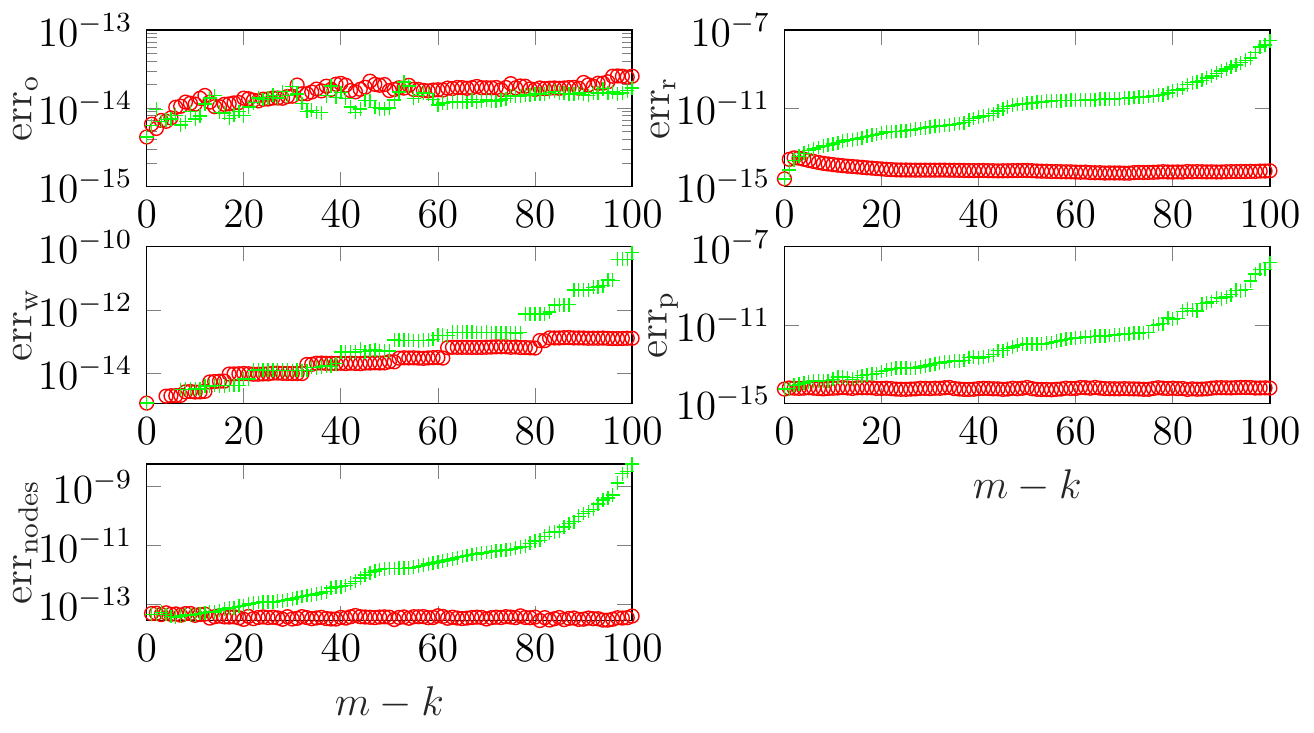}
		\caption{Metrics for the sliding window experiment on the real line. The initial set of nodes is a set of $m=201$ equidistant nodes in $\left[0,\pi\right]$ and $\delta=0.1$ for the poles. The window is slid $\ell = 100$ times as described in Figure \ref{fig:window_realLine}.}
		\label{fig:exp:window_realLine}
	\end{figure}
	
	\begin{figure}[!ht]
		\centering
		\includegraphics{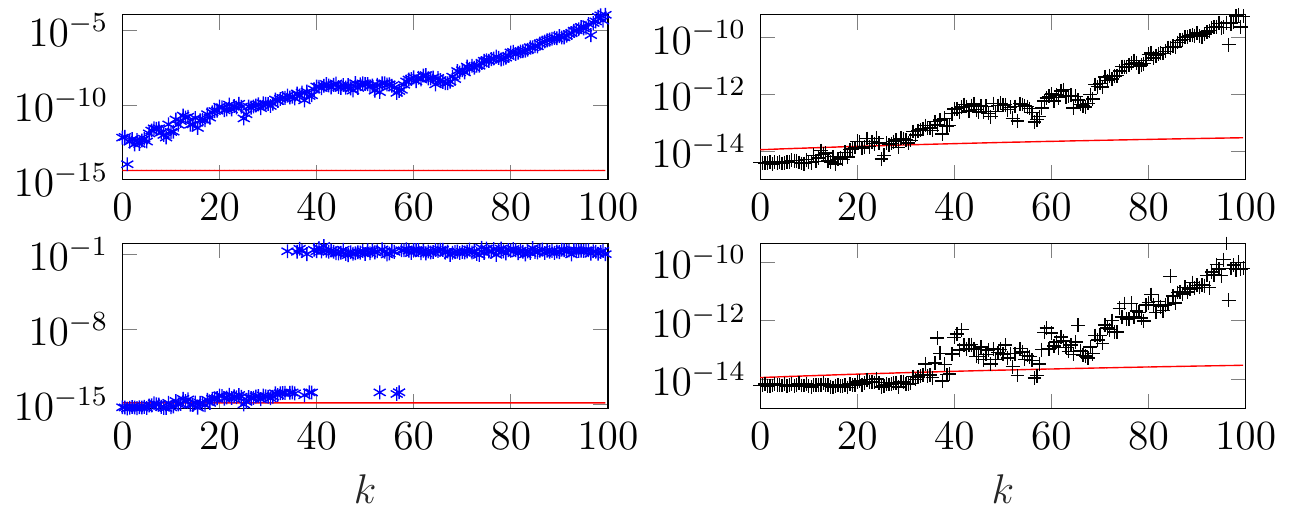}
		\caption{Quantities for the eigenvector method applied to the sliding window experiment on the real line, with $m=201$ equidistant nodes in $\left[0,\pi\right]$, $\delta=0.1$ and $\ell = 100$.
			Top left: \eqref{eq:leftevec_quality}, top right: \eqref{eq:cond_47_left}, bottom left: \eqref{eq:rightevec_quality} and bottom right: \eqref{eq:cond_47_right}. The left hand side of these inequalities are indicated by ${\color{blue} \ast}$ and the right hand side by ${\color{red} -}$.}
		\label{fig:exp:window_realLine_emethod}
	\end{figure}

	\newpage
	\section{Sliding window scheme}\label{sec:sliding_window}
	An important application of the downdating methods proposed in this paper combined with updating methods, e.g., those by \cite{m275,VBVBVa22}, is a sliding window scheme for data or function approximation.
	Consider an unknown function of interest $f(x)$ which is known only in a set of nodes $\{z_j\}_{j=1}^{m}$ with associated weights $\{w_j\}_{j=1}^m$, i.e., we have the data $\{w_j,z_j,f(z_j)\}_{j=1}^m$.
	From this data we would like to obtain an approximation $g(x)$ in the space of polynomials $\mathcal{P}_n$ or space of rational functions $\mathcal{R}_n^\Xi$, of restricted degree $n<m$, which minimizes the least squares criterion
	\begin{equation*}
		\sum_{j=1}^m \vert w_j\vert^2 \vert g(z_j)-f_j\vert^2.
	\end{equation*}
	\cite{Fo57} concluded that the best way to solve a least squares problem in $\mathcal{P}_n$ (or $\mathcal{R}_n^{\Xi}$) on a computer is by its representation in a basis of polynomials (or rational functions) orthogonal with respect to the given data.
	That is, the inner product $\langle p,q\rangle := \sum_{k=1}^{m} \vert w_k\vert^2 p(z_j) \overline{q(z_j)}$.
	Let $\{r_0(z),r_1(z),\dots, r_{n-1}(z)\}$ denote a sequence of rational functions (or polynomials) orthogonal with respect to this inner product.
	Their recurrence coefficients can be obtained by solving Problem~\ref{problem:HPIEP} (or Problem \ref{problem:HIEP}), as well as the orthonormal basis $Q_{n}\in\mathbb{C}^{m\times n}$ for $\mathcal{R}_n^{\Xi}$.
	Then the solution $g(z)\in\textrm{span}\{r_0(z),r_1(z),\dots, r_{n-1}(z)\} = \mathcal{R}_n^\Xi$ is
	\begin{equation*}
		g(z) = \sum_{d=0}^{n-1} \alpha_d r_d(z), \quad \text{with } \begin{bmatrix}
			\alpha_0\\
			\alpha_1\\
			\vdots\\
			\alpha_{n-1}
		\end{bmatrix}
		= Q_n^H \begin{bmatrix}
			w_1 f_1\\
			w_2 f_2\\
			\vdots\\
			w_m f_m
		\end{bmatrix}.
	\end{equation*}
	Note that $Q_n$ does not have to be formed explicitly, the transformations forming $Q_n$ can be directly applied to the vector with data.
	The error of such an approximation is quantified by $\Vert f(x) - g(x) \Vert_\infty$, which is estimated by computing these functions on an interval with $10m$ equidistant nodes.
	
	We illustrate the sliding window scheme for the function $f(x) = (\cos^2 x +1)^{-1}$.
	This function has singularities $\varphi_j = \arccos(\pm \imath) + j\pi$, $j\in\mathbb{Z}$, which suggests the use of a subset of these as poles for the rational function space.
	Suppose we possess the function values $\{f_j\}_{j=1}^m$ at $m=201$ equidistant points $\{x_j\}_{j=1}^m$ in the interval $\left[0,\alpha \pi\right]$ with equal weights $w_j=1$ for all $j$.
	Two least squares approximants are computed, $p(x)\in\mathcal{P}_{65}$ and $r(x)\in \mathcal{R}_{25}^\Xi$, with $\Xi = \{\varphi_j\}_{j=-6}^{2\alpha +5} \cup \{\infty\}_{j=1}^{12-2\alpha}$.
	This choice of poles corresponds to taking all singularities with real part $\textrm{Re}(\varphi_j)$ in the interval $\left[0,\alpha \pi\right]$ and the three first pairs on the left and right of this interval.
	The remaining poles are chosen to be infinity (i.e., polynomials).
	For $\alpha = 1$, both approximants have a high accuracy in the whole interval, the errors are shown in Figure \ref{fig:slid:alpha1_error}. 
	Note that the polynomial approximant has peaks near the endpoints of the interval, this is expected of polynomial interpolation in equidistant points.\\
	\begin{figure}[!ht]
		\centering
		\includegraphics{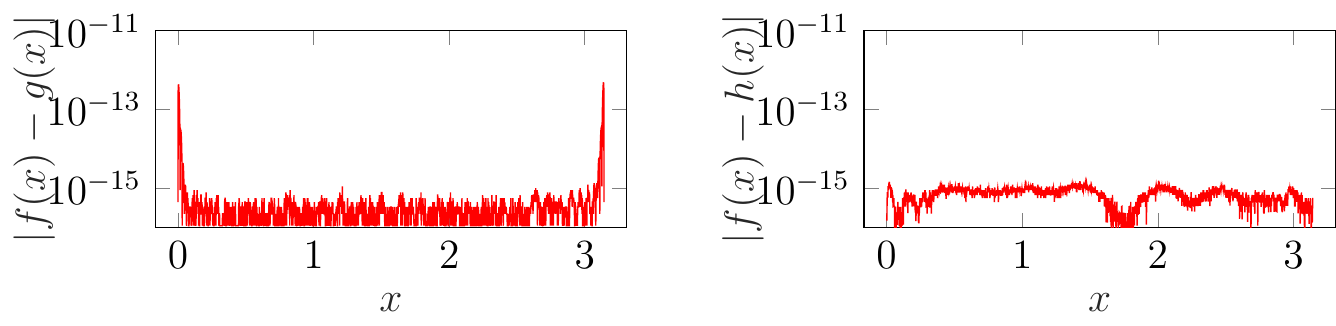}
		\caption{Error for least squares approximants $g(x)\in\mathcal{P}_{65}$ and $h(x)\in\mathcal{R}_{25}^\Xi$ to $f(x) = (\cos^2 x +1)^{-1}$ using $m=201$ equidistant nodes in $\left[0,\pi\right]$.}
		\label{fig:slid:alpha1_error}
	\end{figure}
	
	Starting from these approximants we compute approximants which slide over the interval $\left[0,\pi\right]$ to $\left[\pi,2\pi\right]$ in $\ell = 100$ equidistant steps.
	In each step $k=1,2,\dots,\ell$ the first two nodes $x_{2(k-1)}$ and $x_{2(k-1)+1}$ are downdated and two new nodes are updated on the right of the interval, in the same sense as described in Figure \ref{fig:window_realLine}.
	When $\textrm{Re}(\xi_j) \in \left[x_{(2(k-1))},x_{(2(k-1)+1)}\right]$ for some $j$, then the first pair of poles is replaced by a new pair of poles on the right, in the same spirit as the setup in Figure \ref{fig:window_realLine}.
	The metrics for this experiment are shown in Figure \ref{fig:slid:alpha1_metrics}, the metrics show that the up-and downdating is done satisfactorily.
	After an initial increase by a factor 100 from $k=0$ to $k=1$ the metric $\textrm{err}_\textrm{f}$ for the rational function increases only slightly, by a factor 10 overall.
	For the polynomial approximation there is only a factor 5 increase for this metric.\\
	\begin{figure}[!ht]
		\centering
		\includegraphics{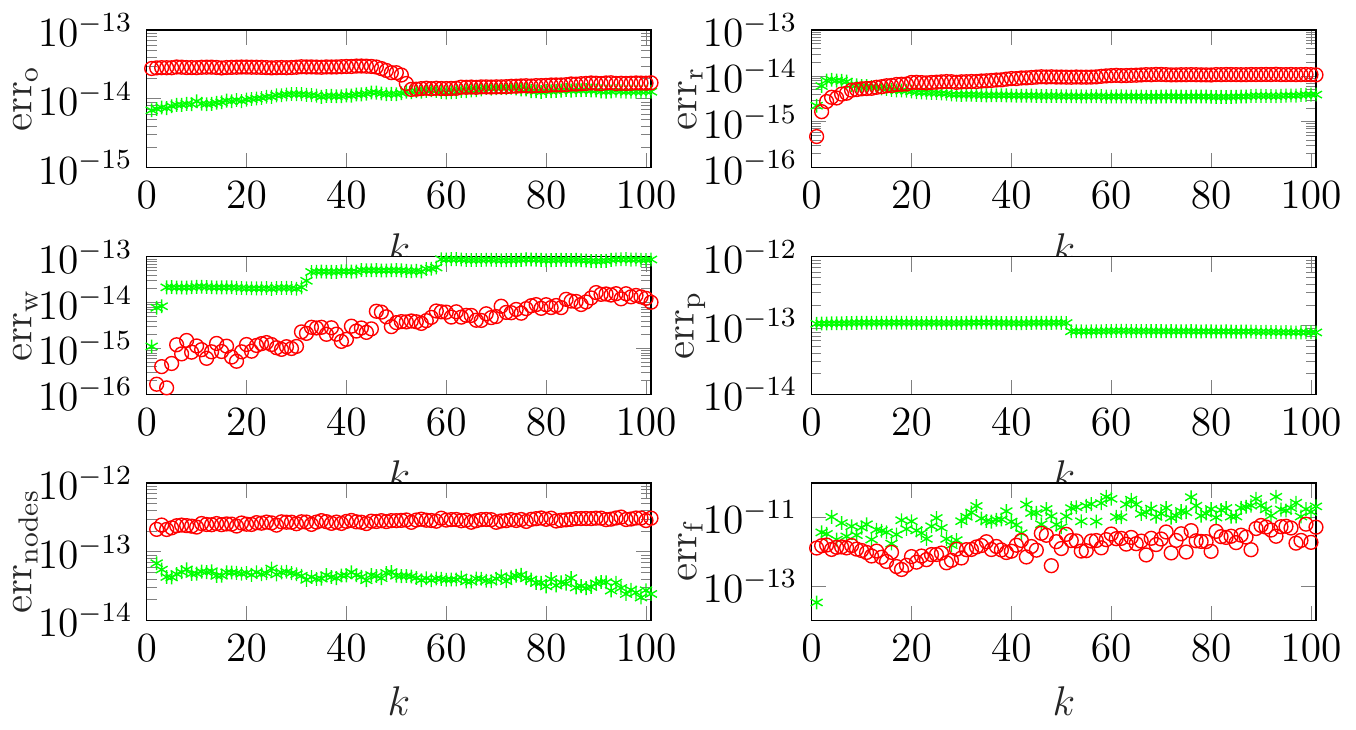}
		\caption{Metrics for sliding window of $m=201$ equidistant nodes on $\left[0,\pi\right]$ to $m$ equidistant nodes on $\left[\pi,2\pi\right]$. Metric $\textrm{err}_\textrm{f}$ for least squares approximants $g(x)\in\mathcal{P}_{65}$ and $h\in\mathcal{R}_{25}^\Xi$ to $f(x) = (\cos^2 x +1)^{-1}$ in this sliding window.
			For the polynomial space with the eigenvector method as downdating procedure ${\color{red} \circ}$ and the rational function space with the implicit matrix method as downdating procedure ${\color{green}\ast}$.}
		\label{fig:slid:alpha1_metrics}
	\end{figure}
	
	The power of rational functions becomes clear when the function $f(x)$ has more singularities near the interval of interest.
	For $\alpha=6$ there are 5 more pairs of singularities which strongly influence the approximation of $f(x)$.
	Without changing the size of the spaces we search for approximations to $f(x)$ on this larger interval.
	For the approximation the polynomial space remains the same, $\mathcal{P}_{65}$, and the rational function space remains of the same dimension $25$, but now there are 24 poles instead of 14 poles for $\alpha=1$.
	The error of the approximants in these spaces is shown in Figure \ref{fig:slid:alpha6_error}, the polynomial approximation is only accurate up to $10^{-3}$ in the middle of the interval and near the endpoints the error is large.
	The rational function approximation still achieves an accuracy of the order $10^{-10}$.
	In Figure \ref{fig:slid:alpha6_metrics} the metrics for both approaches are shown, the up-and downdating is performed accurately.
	For the metric $\textrm{err}_\textrm{f}$ for $h(x)$ we observe, up to oscillations, almost no increase for this metric. 
	These oscillations are a consequence of the approximation quality of our chosen space at each step $k$ for the function $f(x)$ on the interval and thus not a consequence of the linear algebra problem behind.
	For this metric for the polynomial approximation, it is bad and remains bad, but this again is a consequence of the approximation quality.
	The overall shape of the error of $g(x)$, $10^{-4}$ in the middle and peaks near the endpoints, remains the same during the sliding of the window.
	\begin{figure}[!ht]
		\centering
		\includegraphics{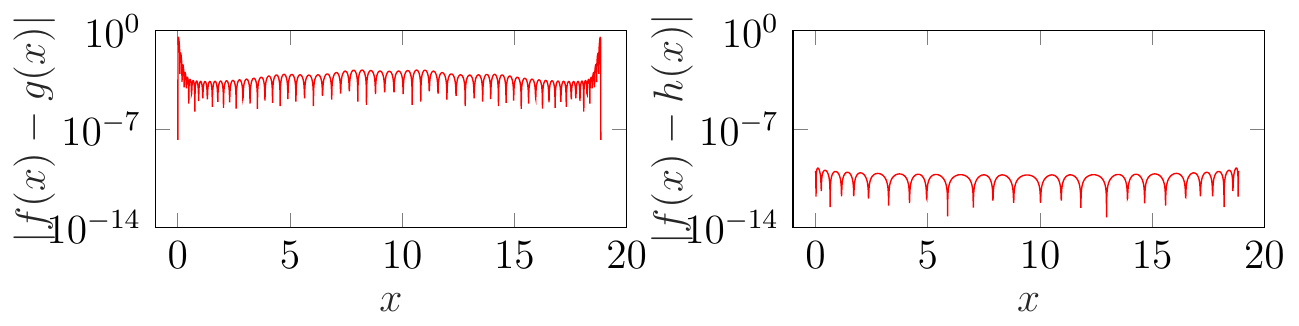}
		\caption{Error for least squares approximants $g(x)\in\mathcal{P}_{65}$ and $h\in\mathcal{R}_{25}^\Xi$ to $f(x) = (\cos^2 x +1)^{-1}$ using $m=201$ equidistant nodes in $\left[0,6\pi\right]$.}
		\label{fig:slid:alpha6_error}
	\end{figure}
	
	\begin{figure}[!ht]
		\centering
		\includegraphics{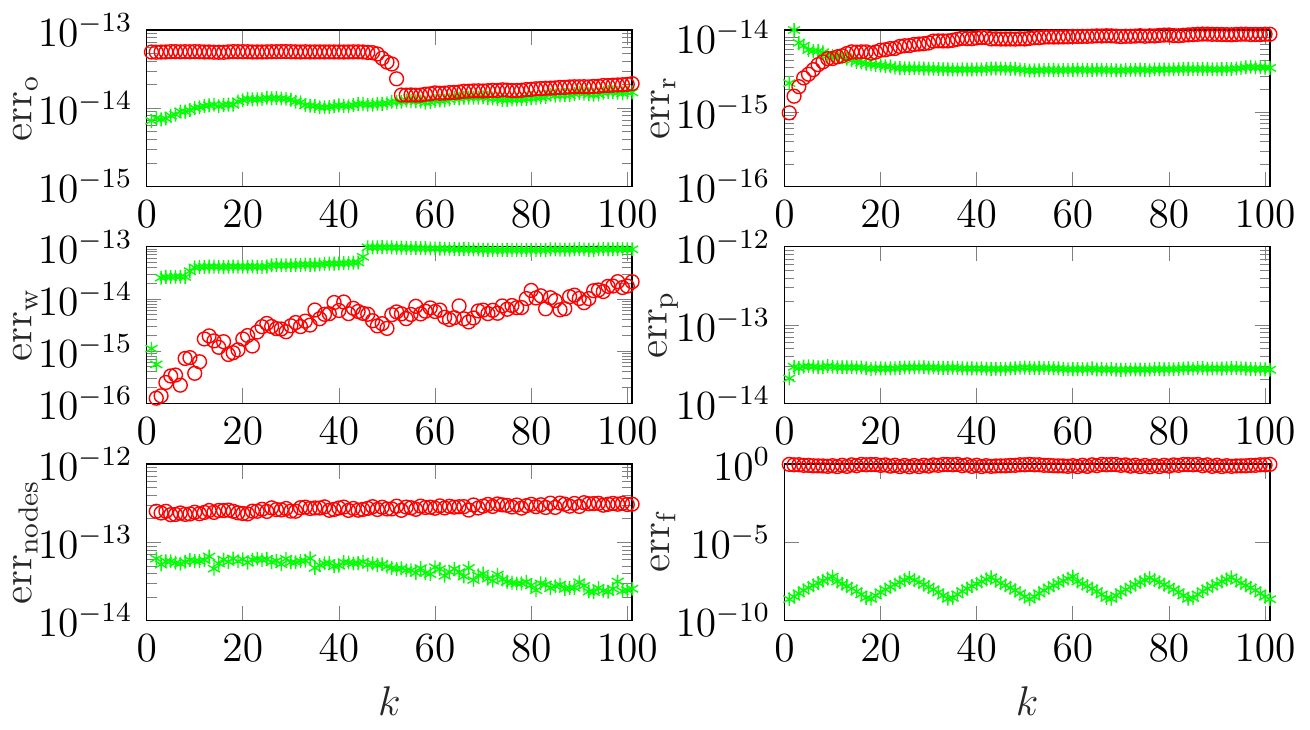}
		\caption{Metrics for sliding window of $m=201$ equidistant nodes on $\left[0,6\pi\right]$ to $m$ equidistant nodes on $\left[6\pi,12\pi\right]$. Metric $\textrm{err}_\textrm{f}$ for least squares approximants $g(x)\in\mathcal{P}_{65}$ and $h\in\mathcal{R}_{25}^\Xi$ to $f(x)=(\cos^2 x +1)^{-1}$ in this sliding window.
			For the polynomial space with the eigenvector method as downdating procedure ${\color{red} \circ}$ and the rational function space with the implicit matrix method as downdating procedure ${\color{green}\ast}$.}
		\label{fig:slid:alpha6_metrics}
	\end{figure}
	\newpage
	\section*{Acknowledgements}
	The research of the first author was partially funded by
	the Research Council KU Leuven, C1-project C14/17/073 (Numerical Linear Algebra and Polynomial Computations) and by the Fund for Scientific Research–Flanders (Belgium), EOS Project no 30468160;
	and the research of the second author by
	Charles University Research program No. PRIMUS/21/SCI/009;
	and the research of the third author partially by
	the Research Council KU Leuven (Belgium), project 
	C16/21/002 (Manifactor: Factor Analysis for Maps into Manifolds) and by the Fund for Scientific Research -- Flanders (Belgium), project G0A9923N (Low rank tensor approximation techniques for up- and downdating of massive online time series clustering);
	and the research of the first and third author is funded partially by the Fund for Scientific Research -- Flanders (Belgium), project G0B0123N (Short recurrence relations for rational Krylov and orthogonal rational functions inspired by modified moments).
	
\newpage
 \bibliographystyle{elsarticle-harv} 
\bibliography{references,TOTAL} 
	

\end{document}